\newcolumntype{V}{!{\vrule width 2pt}}
\numberwithin{equation}{section}
\def\blue{\textcolor{blue}}
\def\red{\textcolor{red}}
\def\green{\textcolor{green}}
\def\magenta{\textcolor{magenta}}
\theoremstyle{plain}
\newtheorem{theorem}{Theorem}[section]
\newtheorem{proposition}[theorem]{Proposition}
\newtheorem{conjecture}[theorem]{Conjecture}
\newtheorem{remark}[theorem]{Remark}
\newtheorem{lemma}[theorem]{Lemma}
\newtheorem{definition}[theorem]{Definition}
\newtheorem{example}[theorem]{Example}
\newtheorem{observation}[theorem]{Observation}
\def\B{\mathcal{B}}
\def\IB{\mathcal{I}}
\def\LB{\mathcal{LB}}
\def\D{\mathcal{D}}
\def\S{\mathfrak{S}}
\def\R{\mathsf{R}}
\def\Rlma{\mathsf{Rma}}
\def\T{\mathcal{T}}
\def\Rlmi{\mathsf{Rmi}}
\def\DES{\mathsf{DT}}
\def\rlmi{\mathsf{rmi}}
\def\rfp{\mathsf{rfp}}
\def\rft{\mathsf{rft}}
\def\N{ \mathbb{N}}
\def\lrrp{\mathsf{lrrp}}
\def\RL{\mathsf{RL}}
\def\RA{\mathsf{RA}}
\def\Tail{\mathsf{Tail}}
\def\s{\mathcal{S}}
\def\lw{\mathsf{lw}}
\def\mlw{\mathsf{mlw}}
\def\llp{\mathsf{llp}}
\def\rrlmi{\mathsf{rrmi}}
\def\c{\mathsf{c}}
\def\r{\mathsf{r}}
\def\lrp{\mathsf{lrp}}
\def\des{\mathsf{des}}
\def\fix{\mathsf{fix}}
\def\bad{\mathsf{bad}}
\def\drop{\mathsf{drop}}
\def\pone{\mathsf{pone}}
\def\prmi{\mathsf{prmi}}
\def\inv{\mathsf{inv}}
\def\rcinv{\mathsf{rcinv}}
\begin{document}

\title[Patterns in $t$-stack sortable permutations]{A  bijection between $321$- and $213$-avoiding permutations\\ preserving  $t$-stack-sortability}

\author[Y. Li]{Yang Li}
\address[Yang Li]{Research Center for Mathematics and Interdisciplinary Sciences, Shandong University \& Frontiers Science Center for Nonlinear Expectations, Ministry of Education, Qingdao 266237, P.R. China}
\email{202421349@mail.sdu.edu.cn}

\author[S. Kitaev]{Sergey Kitaev}
\address[Sergey Kitaev]{Department of Mathematics and Statistics, University of Strathclyde, 26 Richmond Street, Glasgow G1 1XH, United Kingdom}
\email{sergey.kitaev@strath.ac.uk}

\author[Z. Lin]{Zhicong Lin}
\address[Zhicong Lin]{Research Center for Mathematics and Interdisciplinary Sciences, Shandong University \& Frontiers Science Center for Nonlinear Expectations, Ministry of Education, Qingdao 266237, P.R. China}
\email{linz@sdu.edu.cn}

\author[J. Liu]{Jing Liu}
\address[Jing Liu]{Research Center for Mathematics and Interdisciplinary Sciences, Shandong University \& Frontiers Science Center for Nonlinear Expectations, Ministry of Education, Qingdao 266237, P.R. China}
\email{lsweet@mail.sdu.edu.cn}

\date{\today}

\begin{abstract} We construct a bijection between $321$- and $213$-avoiding permutations that preserves the property of  $t$-stack-sortability. Our bijection transforms  natural statistics between these two classes of permutations and proves a refinement of an enumerative conjecture posed by Zhang and Kitaev. This work contributes further to the long-standing line of research on bijections between length-3 pattern avoiding permutations.  Increasing binary trees lie at the heart of our approach.
\end{abstract}

\keywords{Increasing binary trees, $t$-stack-sortable permutations, Pattern avoidance, Bijections.
\newline \indent 2020 {\it Mathematics Subject Classification}. 05A05, 05A19.}

\maketitle

\section{Introduction}
Let $\S_n$ be the set of all permutations of $[n]:=\{1,2,\ldots,n\}$. Given a pattern $p\in \S_k$, a permutation $\pi=\pi_1\pi_2\cdots\pi_n$  is said to avoid the pattern $p$ if there are no indices $ i_1<i_2<\cdots<i_k$ such that the subsequence $\pi_{i_1}\pi_{i_2}\cdots\pi_{i_k}$ is order isomorphic to $p$. For an arbitrary finite collection of patterns $P$, we say that $\pi$ avoids $P$ if 
$\pi$ avoids any $p\in P$. Given a subset $\mathcal{A}\subseteq\S_n$,  let 
$$
\mathcal{A}(P):=\{\pi\in\mathcal{A}: \pi\text{ avoids } P\}.
$$
A classical  enumerative result in pattern avoidance in permutations, attributed to MacMahon and Knuth (see~\cite{Kit2}), asserts that $|\S_n(\sigma)|=C_n$ for
each pattern $\sigma\in\S_3$, where $C_n=\frac{1}{n+1}{2n \choose n}$ is the $n$-th {\em Catalan number}.

For a permutation $\pi=\pi_1 \pi_2 \cdots \pi_n$ with $\pi_i=n$, the {\em stack-sorting operation} $\s$ is recursively defined as follows, where $\s(\emptyset)=\emptyset$:
\begin{equation}\label{def:stack rec}
\s(\pi)=\s(\pi_1 \cdots \pi_{i-1}) \s(\pi_{i+1} \cdots \pi_n) n.
\end{equation}
A permutation $\pi$ is {\em $t$-stack-sortable} if the permutation $\s^t(\pi)$, obtained by applying the stack-sorting operation $t$ times, is the identity permutation. The set of all permutations of length $n$ that are $t$-stack-sortable is denoted by $\S_n^t$. Knuth~\cite{Knu} connected the stack-sorting algorithm with patterns in permutations by  observing that $\S_n^1=\S_n(231)$.  West~\cite{West} characterized $2$-stack-sortable permutations in terms of ``barred patterns'' and conjectured that 
$$
|\S_n^2|=\frac{2(3n)!}{(n + 1)!(2n + 1)!},
$$
which was verified by Zeilberger~\cite{Zei}.
  While \'Ulfarsson~\cite{Ulf} can  characterize  $3$-stack-sortable permutations  in terms of ``decorated patterns'', no explicit formula for  counting them is known. B\'ona~\cite{Bon} proved that the descent polynomials over $t$-stack-sortable permutations are symmetric  and unimodal.  Defant~\cite{Def} developed a decomposition lemma  which leads to the first
polynomial-time algorithm for enumerating $3$-stack-sortable permutations. For more information about $t$-stack-sortable permutations, the reader is referred to the book~\cite{Bon2} by B\'ona.

Despite considerable research on $t$-stack-sortable permutations, the study of pattern avoidance on them has been relatively under-explored. Recently, Zhang and Kitaev~\cite{Zhang} considered length-$3$ patterns on $t$-stack-sortable permutations and posed the following intriguing  enumerative conjecture.

\begin{conjecture}[Zhang and Kitaev~\cite{Zhang}]\label{main conj}
    For any $t\geq1$, we have
    \begin{equation}
     |\S_n^t(321)|=|\S_n^t(213)|.
    \end{equation}
\end{conjecture}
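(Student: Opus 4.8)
The plan is to build an explicit bijection $\phi\colon\S_n(321)\to\S_n(213)$ that intertwines the stack-sorting operator in the strongest possible way, namely $\s(\phi(\pi))=\s(\pi)$ for every $\pi\in\S_n(321)$, while simultaneously transporting several natural permutation statistics. Since $\pi$ is $t$-stack-sortable exactly when $\s(\pi)$ is $(t-1)$-stack-sortable, the identity $\s(\phi(\pi))=\s(\pi)$ immediately yields $\pi\in\S_n^t(321)\iff\phi(\pi)\in\S_n^t(213)$ for all $t\ge1$; comparing cardinalities then proves Conjecture~\ref{main conj}, and the statistic bookkeeping yields a refinement of it.

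The bijection is built through increasing binary trees. To a permutation one attaches its increasing binary tree $T(\pi)$ in the usual way (root labelled by the smallest entry, the entries before and after it forming the left and right subtrees, recursively). The first task is to pin down the relevant tree families. Since in a $321$-avoiding permutation the entries preceding any block-minimum are forced to be increasing, one checks that $T(\pi)$ for $\pi\in\S_n(321)$ ranges precisely over the increasing binary trees in which every nonempty left subtree is a right path (a path all of whose edges go to right children). The analogous analysis for $213$-avoiding permutations produces a second Catalan family of increasing binary trees, and the combinatorial core of the construction is a shape surgery matching the two families (a controlled rearrangement of spines); conjugating the two tree encodings by this surgery defines $\phi$ and at the same time dictates which statistics it sends to which.

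The substantive step is the identity $\s(\phi(\pi))=\s(\pi)$. Here I would pass instead to the \emph{decreasing} binary tree $D(\sigma)$ of a permutation $\sigma$ (root labelled by the largest entry), for which~\eqref{def:stack rec} says exactly that $\s(\sigma)$ is the postorder word of $D(\sigma)$. The goal then becomes: the spine surgery underlying $\phi$, though it genuinely changes $T(\pi)$, does not change the postorder of the associated decreasing tree; equivalently, the fibres of $\s$ are cut the same way by the two pattern classes, $|\s^{-1}(\sigma)\cap\S_n(321)|=|\s^{-1}(\sigma)\cap\S_n(213)|$, with $\phi$ realizing the match fibrewise. I expect this to be the main obstacle: the $321$- and $213$-structures interact with stack-sorting through different parts of their trees, so the proof depends on isolating the right intermediate objects — increasing binary trees carrying just enough of the sorting data — on which the surgery becomes transparently postorder-preserving. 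Granting that single identity, the reduction to all $t$ and the extraction of the refinement are immediate.
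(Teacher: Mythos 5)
Your overall plan---encode both classes by increasing binary trees and transfer $t$-stack-sortability through a tree surgery---is in the spirit of the paper, but the specific identity you build everything on is false. You require a bijection $\phi\colon\S_n(321)\to\S_n(213)$ with $\s(\phi(\pi))=\s(\pi)$, and you correctly note this is equivalent to the fibrewise statement $|\s^{-1}(\sigma)\cap\S_n(321)|=|\s^{-1}(\sigma)\cap\S_n(213)|$ for every $\sigma$. That statement already fails at $n=4$: take $\sigma=1324$. Then $\s^{-1}(1324)\cap\S_4(321)=\{1342,\,3142\}$ (both avoid $321$, and $\s(1342)=\s(3142)=\s(13)\,\s(2)\,4=1324$), whereas $\s^{-1}(1324)\cap\S_4(213)=\{1342\}$, since $3142$ contains the $213$-pattern $3\,1\,4$ and no other $213$-avoider of length $4$ sorts to $1324$. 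Dually, $\s^{-1}(3124)$ meets $\S_4(321)$ in one element ($3412$) but meets $\S_4(213)$ in two ($3412$ and $3421$). So no bijection can intertwine $\s$ in the way you propose; the conjecture holds only because the fibre discrepancies cancel within each sortability level (here $1324$ and $3124$ are both $1$-stack-sortable), not fibre by fibre. A secondary issue: your description of the image of $\S_n(321)$ under the tree encoding (``every nonempty left subtree is a right path'') is only a necessary condition; one also needs a compatibility between the labels of the hanging right chains and their attachment points on the right arm, otherwise the tree family is strictly larger than Catalan.

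The paper avoids this trap by never comparing $\s(\pi)$ with $\s(\phi(\pi))$. Instead it proves two characterizing lemmas: a $213$-avoider $\pi$ is $t$-stack-sortable iff a tree statistic $\lrrp(\lambda(\pi))\le t$, and a $321$-avoider $\pi$ is $t$-stack-sortable iff $\mlw(\pi)\le t$ (the maximal displacement $j-\pi_j$ over right-to-left minima). Each lemma is proved by showing that one application of $\s$ decreases the relevant statistic by exactly one (for the $213$ side this requires enlarging the class to ``tailed'' $213$-avoiders, since $\S_n(213)$ is not closed under $\s$). The bijection is then a chain $\S_n(321)\to\D_n\to\T_n\to\B_n\to\B_n\to\S_n(213)$ matching the two bounded statistics, with no claim about the images under $\s$. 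If you want to salvage your approach, you must weaken the intertwining to ``$\phi$ matches the number of further sorting steps needed,'' which is exactly what the two characterizing lemmas accomplish.
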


Conjecture~\ref{main conj} can be regarded as a refinement of the aforementioned MacMahon--Knuth result stating that  $|\S_n(213)|=|\S_n(321)|$. Zhang and Kitaev~\cite{Zhang} confirmed  this conjecture for $t\in\{n- 1, n-2, n-3, n-4\}$, while the case of $t = 1$ is known by the work of Simion and Schmidt~\cite{SS}.  Various bijections between $\S_n(213)$ and $\S_n(321)$ have been found in the literature~\cite{EP, Kra,LLY,Reifegerste,SS} (see also~\cite{CK} and~\cite[Chap.~4]{Kit2} for classification of some classical ones), leading to diverse refinements of the MacMahon--Knuth result. However, none of them  would preserve the property of $t$-stack-sortability. The main objective of this paper is to construct a $t$-stack-sortability  preserving bijection between $\S_n(213)$ and $\S_n(321)$, thereby proving Conjecture~\ref{main conj} (with a refinement) and contributing to a long line of research on bijections between length-3 pattern avoiding permutations, a topic first analyzed by Claesson and Kitaev  in~\cite{CK}.

In order to state our result, we need to recall some classical permutation statistics. Given $\pi=\pi_1\cdots\pi_n\in\S_n$, a letter $\pi_i$ is a {\em descent top}  of $\pi$ if $i<n$ and $\pi_i>\pi_{i+1}$. A letter $\pi_i$ is a {\em drop} (resp.,~{\em excedance}, {\em fixed point})  of $\pi$ if $\pi_i<i$ (resp.,~$\pi_i>i$, $\pi_i=i$). A letter $\pi_i$ is a {\em right-to-left minimum} of $\pi$ if $\pi_i<\pi_j$ for all $j>i$.  Following Athanasiadis~\cite{Ath}, a letter $\pi_i$ is said to be a {\em bad point} if $\pi_i$ is a right-to-left minimum of $\pi$ and either  $i=1$ or $\pi_{i-1}<\pi_i$. Denote by $\des(\pi)$ (resp.,~$\bad(\pi)$, $\drop(\pi)$, $\fix(\pi)$) the number of descent tops (resp.,~bad points, drops, fixed points) of $\pi$. 
The first fundamental transformation (cf.~\cite[Sec.~1.3]{St0}) is a bijection on $\S_n$ that transforms the pair  $(\fix,\drop)$ to $(\bad,\des)$. Our main bijection below possesses the same flavor, which proves a refinement of Conjecture~\ref{main conj}. 

\begin{theorem}\label{main:bij}
There exists a $t$-stack-sortability  preserving bijection $\Upsilon: \S_n(321)\rightarrow\S_n(213)$ that transforms the pair  $(\fix,\drop)$ to $(\bad,\des)$. Consequently, 
\begin{equation}
\sum_{\pi\in\S_n^t(321)}r^{\fix(\pi)}x^{\drop(\pi)}=\sum_{\pi\in\S_n^t(213)}r^{\bad(\pi)}x^{\des(\pi)}. 
\end{equation}
\end{theorem}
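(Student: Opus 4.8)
The plan is to factor $\Upsilon$ through increasing binary trees, which simultaneously encode the two classes and linearize the recursion~\eqref{def:stack rec} governing $\s$. Recall that reading a \emph{decreasing} binary tree (largest label at the root, with the recursion on the left and right factors determined by the position of the maximum) in post-order is exactly the effect of one pass of $\s$; hence $\pi$ being $t$-stack-sortable is equivalent to $t$ iterated post-order readings collapsing its tree to the decreasing tree of the identity (a left spine), a purely tree-theoretic condition. I would first identify the sub-families of \emph{increasing} binary trees (minimum at the root) corresponding to $\S_n(321)$ and to $\S_n(213)$: for $213$-avoiders the in-order decomposition $\pi=L\,1\,R$ forces every label of $L$ to exceed every label of $R$, recursively, so such trees are freely indexed by their underlying unlabeled shapes; for $321$-avoiders the same decomposition forces $L$ to be increasing (a path of right-children) together with a control condition tying $\max(L)$ to the non-right-to-left-minima of $R$. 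These descriptions make the right-to-left minima and the descent tops of a permutation visible as the right spine and the right-child edges of its increasing tree — the geometric reason $\bad$ and $\des$ are the natural statistics on the $213$ side — while on the $321$ side the very same tree reads off $\fix$ and $\drop$.

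Next I would define $\Upsilon$ directly on trees as a recursion that, reading from the root downward, rebuilds the ``$321$-shaped'' local data into the ``$213$-shaped'' local data: the increasing right path and the recursively-treated remainder of a $321$-tree get reshuffled into the left-heavy branching of a $213$-tree, with labels redistributed according to the top-versus-bottom splitting. The bookkeeping is precisely a Foata-type interleaving of a ``fixed'' part of the tree against a ``sliding'' part, which is why $\Upsilon$ has the same flavor as the first fundamental transformation and, once the local moves are tracked carefully, yields $\fix(\pi)=\bad(\Upsilon(\pi))$ and $\drop(\pi)=\des(\Upsilon(\pi))$. Reversibility is built in: running the local moves backwards recovers $\pi$ from $\Upsilon(\pi)$, so $\Upsilon$ is a bijection $\S_n(321)\to\S_n(213)$, and the statistic claim then follows by a short induction mirroring the recursion defining $\Upsilon$.

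The crux — and the step I expect to be the main obstacle — is showing that $\Upsilon$ preserves $t$-stack-sortability for \emph{every} $t$ at once, equivalently that $\pi$ and $\Upsilon(\pi)$ need the same number of passes of $\s$ to reach the identity. Since $\s$ does not commute with $\Upsilon$, I would instead isolate a recursively-defined ``stack-sorting depth'' statistic $\mathsf{h}$ on decreasing trees, prove by induction on $n$ that on each of $\S_n(321)$ and $\S_n(213)$ one has $\min\{t:\s^t(\pi)=\mathrm{id}\}=\mathsf{h}(\text{tree of }\pi)$, and then verify that each local move comprising $\Upsilon$ leaves $\mathsf{h}$ unchanged. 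Establishing the first half forces one to understand how a single pass of $\s$ transforms a tree subject to the branch-monotonicity and control constraints that cut out the two classes: one must show these constrained families are, up to a normalization, stable under $\s$ until the depth drops by one, and extract from that a clean recursion for the depth. This interplay between $\s$ and the $321$/$213$ tree constraints is the technical heart of the argument; the remaining parts — bijectivity of $\Upsilon$, the transport of the statistic pairs, and finally specializing to the truth value $\s^t=\mathrm{id}$ and to $r=x=1$ to recover Conjecture~\ref{main conj} — are comparatively routine.
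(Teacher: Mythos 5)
Your outline follows the same broad strategy as the paper --- characterize the minimal number of sorting passes for each class by a statistic visible on (increasing) binary trees, then build a tree-level bijection matching those statistics --- but as written it is a plan rather than a proof: every step that carries the actual mathematical weight is deferred. Concretely, you never produce the two ``depth'' statistics. The paper's versions are $\lrrp$ (one plus the maximal number of right edges on a path disjoint from the right arm) on the $213$ side and $\mlw(\pi)=\max_j\{\,j-\pi_j : \pi_j \text{ a right-to-left minimum}\,\}$ on the $321$ side, and proving that each of these equals $\min\{t:\s^t(\pi)=\mathrm{id}\}$ is the technical core (Lemmas~\ref{lem:213} and~\ref{lem:321}, occupying all of Section~\ref{sec:3}). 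Moreover, your proposed route to that equality --- show the constrained tree families are ``stable under $\s$ until the depth drops by one'' --- fails as literally stated on the $213$ side: $\S_n(213)$ is \emph{not} closed under $\s$ (e.g.\ the image of a $213$-avoider can contain $213$), which is exactly why the paper must enlarge the class to ``tailed $213$-avoiding permutations'' and work with the tree operator $\Phi$ on that larger class. Your plan does not anticipate or repair this.

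The second gap is that $\Upsilon$ itself is never defined. Saying the recursion ``rebuilds the $321$-shaped local data into the $213$-shaped local data'' with a ``Foata-type interleaving'' specifies no map, so bijectivity, the transport $(\fix,\drop)\mapsto(\bad,\des)$, and the preservation of the depth statistic cannot be verified. For comparison, the paper makes this completely explicit as a composition $\lambda^{-1}\circ\gamma\circ\varphi\circ\tau^{-1}\circ\rho$ through Dyck paths, plane trees, and unlabeled binary trees, where the only new ingredient is the involution $\gamma$ (mirror-reflecting each subtree hanging to the left of the right arm), which manifestly exchanges $\llp$ with $\lrrp$; the $t$-preservation then falls out because $\mlw\le t$ translates to bounded depth of the plane tree, hence $\llp\le t$, hence $\lrrp\le t$ after $\gamma$. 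Until you (i) exhibit the two statistics and prove they compute the sorting depth --- handling the non-closure of $\S_n(213)$ under $\s$ --- and (ii) write down an explicit bijection and check it matches them, the proof is not complete.
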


The construction of $\Upsilon$ is based on two characterizing lemmas for $321$- or $213$-avoiding  $t$-stack-sortable permutations.  Our bijection $\Upsilon$ is a composition of five bijections, four of which are known, while the fifth is an involution on binary trees. As it turns out, $\Upsilon$ transforms more permutation statistics between these  two classes of permutations; see Section~\ref{subsection:2.2}. 
At the heart of our approaches lie the increasing binary trees, whose symmetry reveals the insight of the proof. Further applications of our approaches will also be discussed, which not only settles another related enumerative conjecture of Zhang and Kitaev~\cite{Zhang} but also provides  a bijective proof of a result previously established via the transfer-matrix method by Chow and West~\cite{CW}.

The rest of this paper is organized as follows. In Section~\ref{sec:2}, we present two characterizing lemmas for $321$- or $213$-avoiding  $t$-stack-sortable permutations and construct  the bijection $\Upsilon$ to prove Theorem~\ref{main:bij}. The proofs of the two characterizing lemmas are postponed to Section~\ref{sec:3}, further applications of which are presented in Section~\ref{sec:4}.

\section{Two characterizing lemmas and the construction of $\Upsilon$}
\label{sec:2}
The purpose of this section is to present two characterizing lemmas for $321$- or $213$-avoiding  $t$-stack-sortable permutations which are key to the construction of $\Upsilon$. These two lemmas are based on a classical bijection between permutations and increasing binary trees. 

A {\em binary tree} is a rooted tree in which each node has at most two children: a left child and a right child. We denote by $\B_n$ the set of all binary trees with $n$ nodes. An {\em increasing binary tree} on $n$ nodes is a binary tree labeled precisely  by $[n]$, where each parent node receives a label  smaller than all its children. Let $\IB_n$ denote the set of all increasing binary trees on $[n]$. 

We now recall a classical bijection $\lambda: \S_n \rightarrow \IB_n$  from~\cite[p.~23]{St0}.
Given a word $w=w_1w_2\cdots w_n$ of positive integers with no repeated letters, we can write it as $w=\sigma i\tau$ with $i$ as the smallest letter of $w$.  Set $\lambda(\emptyset)=\emptyset$ and define $\lambda(w)$ recursively as $(\lambda(\sigma), i,\lambda(\tau))$, the increasing binary tree rooted at $i$ with left branch $\lambda(\sigma)$ and right branch $\lambda(\tau)$.  The inverse map $\lambda^{-1}$ can be viewed as projecting the nodes of the increasing binary trees downwards (i.e., reading the labels of the nodes in in-order). See Fig.~\ref{fig lambda} for an example of $\lambda$.

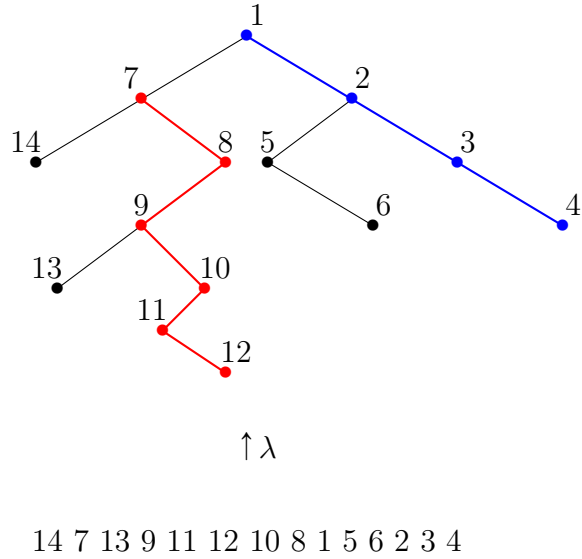
\begin{figure}
\centering
\begin{tikzpicture}[scale=0.28]

\node at  (0,0.5){$\uparrow$};\node at  (1,0.5){$\lambda$};
\node at  (0,-4){$14~ 7~ 13~ 9~ 11~ 12~ 10~ 8~ 1~ 5~ 6~ 2~ 3~ 4$};

\draw[-,blue,thick](0,20) to (15,11);
\draw[-](0,20) to (-10,14);
\draw[-,red,thick](-5,17) to (-1,14);
\draw[-](-9,8) to (-5,11);
\draw[-,red,thick](-5,11) to (-1,14);
\draw[-,red,thick](-5,11) to (-2,8);
\draw[-,red,thick](-4,6) to (-2,8);
\draw[-,red,thick](-4,6) to (-1,4);
\draw[-](5,17) to (1,14);
\draw[-](1,14) to (6,11);

\node at  (0,20){\blue{$\bullet$}};\node at  (0.5,21){$1$};
\node at  (-5,17){\red{$\bullet$}};\node at  (-5.5,18){$7$};
\node at  (-10,14){$\bullet$};\node at  (-10.5,15){$14$};
\node at  (-1,14){\red{$\bullet$}};\node at  (-1,15){$8$};
\node at  (-5,11){\red{$\bullet$}};\node at  (-5,12){$9$};
\node at  (-9,8){$\bullet$};\node at  (-9.5,9){$13$};
\node at  (-2,8){\red{$\bullet$}};\node at  (-1.5,9){$10$};
\node at  (-4,6){\red{$\bullet$}};\node at  (-4.5,7){$11$};
\node at  (-1,4){\red{$\bullet$}};\node at  (-0.5,5){$12$};
\node at  (5,17){\blue{$\bullet$}};\node at  (5.5,18){$2$};
\node at  (10,14){\blue{$\bullet$}};\node at  (10.5,15){$3$};
\node at  (15,11){\blue{$\bullet$}};\node at  (15.5,12){$4$};
\node at  (1,14){$\bullet$};\node at  (1,15){$5$};
\node at  (6,11){$\bullet$};\node at  (6.5,12){$6$};

\end{tikzpicture}
\caption{An example illustrating the bijection $\lambda$.\label{fig lambda}}
\end{figure}

There exists a natural injection from binary trees to increasing binary trees. The {\em natural labeling} of a binary tree in $\B_n$ is obtained by labeling its nodes with $[n]$ in {\em right pre-order}, which traverses the parent node first, then the right subtree, and finally the left subtree. See the labeling of the  binary tree in Fig.~\ref{fig lambda} for an example.  Under this natural injection, the mapping  $\lambda$ induces a bijection from  $\S_n(213)$ to $\B_n$.

For a binary tree $T$ (labeled or unlabeled), the {\em right arm} (resp.,~{\em left arm}) of $T$ is defined as the unique maximal path  starting from the root and containing only right (resp.,~left) edges.
A {\em  restricted  path} of $T$ is a path that does not involve any node in the right arm of $T$.
A {\em longest restricted right path} is a restricted path containing maximum number of right edges.  
For the tree $T$ in Fig.~\ref{fig lambda}, the right arm of $T$ is $1-2-3-4$ (in blue) and  the only longest restricted right path is $7-8-9-10-11-12$ (in red).
Let $\lrrp(T)$ denote one plus the number of right edges in any longest restricted right path of $T$. We use the convention that $\lrrp(T)=0$ for the special  tree $T$ with no left edges. 
The following lemma characterizes $t$-stack-sortable $213$-avoiding  permutations using the statistic $\lrrp$ on binary trees. 
\begin{lemma}\label{lem:213}
For $t\geq1$ and a $213$-avoiding permutation $\pi$, $\pi$  is $t$-stack-sortable if and only if $\lrrp(\lambda(\pi))\leq t$. 
\end{lemma}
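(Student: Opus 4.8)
The plan is to set up a recursive understanding of how the stack-sorting operation $\s$ acts on $213$-avoiding permutations via the bijection $\lambda$, and then track the statistic $\lrrp$ under iteration. First I would observe that if $\pi\in\S_n(213)$ has the decomposition $\pi=\sigma\,1\,\tau$ (splitting at the minimal letter $1$, which is the root of $\lambda(\pi)$), then $213$-avoidance forces every letter of $\sigma$ to be smaller than every letter of $\tau$; in particular $\sigma$ and $\tau$ are themselves (standardizations of) $213$-avoiding permutations, and $\lambda(\pi)=(\lambda(\sigma),1,\lambda(\tau))$. Moreover the right arm of $\lambda(\pi)$ is $1$ followed by the right arm of $\lambda(\tau)$, while $\lambda(\sigma)$ sits entirely in the left subtree, so all of its right paths are restricted paths of $\lambda(\pi)$. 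I would record the resulting identity
\begin{equation}\label{eq:lrrp-rec}
\lrrp(\lambda(\pi))=\max\bigl\{\lrrp(\lambda(\sigma)),\ \text{(1 + length of the right arm of }\lambda(\sigma)),\ \lrrp(\lambda(\tau))\bigr\},
\end{equation}
the middle term accounting for restricted right paths of $\lambda(\pi)$ that begin at the root of $\lambda(\sigma)$ and run down its right arm. (Some care with the $\lrrp=0$ convention for the left-edge-free tree is needed, but it is routine.)

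Next I would analyze $\s$ through \eqref{def:stack rec}: writing $\pi$ with its \emph{maximal} letter $n$ as $\pi=\alpha\,n\,\beta$, we get $\s(\pi)=\s(\alpha)\,\s(\beta)\,n$. The key structural fact for $213$-avoiders is that $\beta$ must be increasing (else $n$ together with a descent in $\beta$... actually $\beta$ together with any later element would not create $213$; the real constraint is that $\alpha$ consists of all letters larger than those of $\beta$ — one must check which of the two holds, and translate it into a statement about the right arm of $\lambda(\pi)$). The cleanest route is to phrase the action of $\s$ directly on the tree $\lambda(\pi)\in\B_n$ (using the natural labeling / the identification of $\S_n(213)$ with $\B_n$): stack-sorting should correspond to a simple local tree operation, essentially a rotation that "peels off" the right arm and reattaches the left subtrees one level higher. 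I expect to prove a clean statement of the form: $\lambda(\s(\pi))$ is obtained from $\lambda(\pi)$ by a tree surgery whose effect on $\lrrp$ is exactly to decrease it by $1$ (when positive), i.e. $\lrrp(\lambda(\s(\pi)))=\max\{\lrrp(\lambda(\pi))-1,\,0\}$ on the $213$-avoiding class, together with the fact that $\s$ preserves $213$-avoidance. Granting this, the lemma follows immediately: $\s^t(\pi)$ is the identity iff $\lambda(\s^t(\pi))$ is the right arm (the unique left-edge-free tree) iff $\lrrp$ has been driven to $0$ in at most $t$ steps iff $\lrrp(\lambda(\pi))\le t$. For the base case $t=1$ this recovers Knuth's $\S_n^1=\S_n(231)$ intersected with $\S_n(213)$, namely the permutations with $\lrrp\le 1$, which one can check directly.

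The main obstacle is establishing the precise effect of $\s$ on the tree, i.e. the identity $\lrrp(\lambda(\s(\pi)))=\lrrp(\lambda(\pi))-1$ for $213$-avoiders with $\lrrp>0$. One has to show two things in tandem: (i) $\s$ maps $\S_n(213)$ into itself — this needs an argument that no new $213$ pattern is created, using that in $\s(\alpha)\s(\beta)\,n$ the block $\s(\alpha)$ has larger values than $\s(\beta)$ and each block is already $213$-avoiding by induction, so a $213$ would have to straddle the blocks, which the value separation rules out; and (ii) the bookkeeping that a longest restricted right path of length $\ell$ in $\lambda(\pi)$ becomes one of length $\ell-1$ in $\lambda(\s(\pi))$ — no shorter, and crucially no longer. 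The "no longer" direction is the delicate part: stack-sorting could in principle merge short restricted paths into a long one, and ruling this out requires pinning down exactly how the left subtrees hanging off the right arm of $\lambda(\pi)$ are redistributed by $\s$. I would handle this by an induction on $n$ that carries along \eqref{eq:lrrp-rec} and a companion claim about what happens to the right arm, so that the "straddling" recombinations are controlled term by term. Once the local tree description of $\s$ is nailed down, the rest is a short induction.
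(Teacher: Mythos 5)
There is a genuine gap at the heart of your plan: the claim in your step (i) that $\s$ maps $\S_n(213)$ into itself is false. The smallest counterexample is $\pi=231\in\S_3(213)$, for which $\s(231)=213\notin\S_3(213)$; similarly $4231\in\S_4(213)$ but $\s(4231)=2134$ contains $213$. Your proposed justification (``a $213$ would have to straddle the blocks'') does not apply because the offending pattern can appear \emph{inside} a single block $\s(\beta)$: the inductive hypothesis that each sorted block is again $213$-avoiding is exactly what fails. Since your whole argument is an induction on the iterates $\s^t(\pi)$ carrying the identity $\lrrp(\lambda(\s(\pi)))=\max\{\lrrp(\lambda(\pi))-1,0\}$ along a class closed under $\s$, the induction cannot be run as stated: after one application of $\s$ you leave $\S_n(213)$ and lose control of the tree structure you are relying on. The paper explicitly flags this obstruction and repairs it by enlarging the class to the ``tailed'' $213$-avoiding permutations $\widetilde{\S}_n(213)$ (those that become $213$-avoiding after deleting the terminal maximal consecutive increasing run, except its smallest letter); this larger class \emph{is} closed under $\s$, the action of $\s$ on it is modelled by the tree operation $\Phi$ that moves every right leaf onto the right arm, and $\Phi$ decreases $\lrrp$ by exactly one. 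Your single-step identity is in fact correct for $\pi\in\S_n(213)$ (precisely because $\S_n(213)\subseteq\widetilde{\S}_n(213)$), but without some such enlargement you cannot iterate it, and that is the missing idea.

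Two secondary points. First, in your splitting $\pi=\sigma\,1\,\tau$ at the minimum, $213$-avoidance forces every letter of $\sigma$ to be \emph{larger} than every letter of $\tau$ (a triple $\sigma_i,\,1,\,\tau_j$ with $\sigma_i<\tau_j$ would be a $213$), not smaller; this is consistent with the right pre-order labeling in which the left subtree receives the larger labels. Second, your recursion for $\lrrp$ should read $\lrrp(\lambda(\pi))=\max\{1+\lrp(\lambda(\sigma)),\,\lrrp(\lambda(\tau))\}$ for nonempty $\sigma$, where $\lrp$ counts right edges in a longest right path of the whole left subtree (not just along its right arm, and not only along restricted paths of $\lambda(\sigma)$). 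These are fixable, but the closure issue above is structural.
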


Given a permutation $\pi\in\S_n$, let  $\Rlmi(\pi)$ be the set of all right-to-left minima of $\pi$. 
If $\pi\in\S_n(321)$, then introduce 
$$\mlw(\pi):=\max_{\pi_j\in\Rlmi(\pi)}\{\lw_j(\pi)\},$$
where $\lw_j(\pi):=j-\pi_j$. For example, if $\pi=3\,4\,1\,6\,8\,2\,5\,7\,9\,12\,10\,11\in\S_{12}(321)$, then $\mlw(\pi)=\lw_6(\pi)=4$. The next lemma characterizes $t$-stack-sortable $321$-avoiding  permutations using the statistic $\mlw$. 
\begin{lemma}\label{lem:321}
For $t\geq1$ and a $321$-avoiding permutation $\pi$, $\pi$ is $t$-stack-sortable if and only if $\mlw(\pi)\leq t$. 
\end{lemma}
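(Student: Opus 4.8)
The plan is to analyze directly how the stack-sorting operation $\s$ acts on a $321$-avoiding permutation, tracking the statistic $\mlw$. The key structural observation is that a permutation $\pi\in\S_n(321)$ is determined by its right-to-left minima together with the ``excedance'' values sitting between consecutive ones: if $\pi_{i_1},\pi_{i_2},\ldots,\pi_{i_k}$ are the right-to-left minima (so $i_k=n$ and $\pi_{i_1}<\pi_{i_2}<\cdots<\pi_{i_k}$), then each block strictly between two consecutive right-to-left minima consists of values larger than everything to its right and must itself be increasing (otherwise a $321$ pattern appears). I would first record this normal form, and note that $\mlw(\pi)=\max_j(j-\pi_j)$ over right-to-left minima $\pi_j$ measures, for each right-to-left minimum, how many letters to its left are larger than it — equivalently the ``backlog'' that must be cleared before that minimum reaches its sorted position.

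Next I would compute $\s(\pi)$ from the recursion~\eqref{def:stack rec}. Writing $\pi=\alpha\, n\,\beta$, one checks that for $\pi\in\S_n(321)$ the suffix $\beta$ is increasing and consists of the largest remaining values, and $\alpha$ is again $321$-avoiding; unwinding the recursion shows $\s(\pi)$ is $321$-avoiding as well (this is classical, but I would reprove the piece I need). The crucial quantitative claim is:
\begin{equation}\label{eq:mlw-drop}
\mlw(\s(\pi))=\max(\mlw(\pi)-1,\,0)\qquad\text{for }\pi\in\S_n(321).
\end{equation}
To prove \eqref{eq:mlw-drop} I would identify, for each right-to-left minimum $\pi_j$ of $\pi$ with $\lw_j(\pi)=j-\pi_j\geq 1$, the corresponding letter in $\s(\pi)$ and show its left-weight drops by exactly one: applying the stack once moves each such minimum past exactly one of the larger letters currently blocking it (the nearest larger letter to its left that is ``above'' it in the tree sense), while right-to-left minima already in position ($\lw_j=0$) stay in position. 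Conversely, no new right-to-left minimum with large left-weight is created. Iterating \eqref{eq:mlw-drop} gives $\s^t(\pi)=\mathrm{id}$ iff $\mlw(\s^{t-1}(\pi))\le 1$ and \ldots\ iff $\mlw(\pi)\le t$, using that $\mlw(\sigma)=0$ exactly when $\sigma$ is the identity (the only $321$-avoiding permutation all of whose right-to-left minima are fixed points).

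The main obstacle I anticipate is proving \eqref{eq:mlw-drop} cleanly — in particular verifying that the left-weight of each relevant right-to-left minimum decreases by \emph{exactly} one (not zero, not two) under a single stack pass, and that the maximum is attained after the pass by the image of a minimizer before the pass. The cleanest route is probably to pass through the tree $\lambda(\pi)$: under the bijection $\lambda$, the right-to-left minima of $\pi$ are precisely the nodes on the left arm together with certain left-child nodes, and a single application of $\s$ has a known description as a tree operation (pulling the root's right subtree out). I would phrase the left-weight of a right-to-left minimum in terms of the shape of the tree to its left and show the stack operation shortens exactly one relevant path by one. Alternatively, one can argue purely combinatorially on the normal form above by tracking, block by block, which large letter each minimum ``jumps'' over; I would carry out whichever of these is shorter, and relegate the bookkeeping to a short lemma.
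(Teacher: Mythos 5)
Your overall strategy is the same as the paper's: reduce the lemma to the single quantitative claim that one pass of $\s$ decreases $\mlw$ by exactly one (until it reaches $0$, which for $321$-avoiding permutations happens only at the identity), and prove that claim by transporting $\s$ through the increasing-binary-tree bijection $\lambda$. The paper does precisely this: it defines a statistic $\mathsf{g}$ on the image trees with $\mathsf{g}(\lambda(\pi))=\mlw(\pi)$, builds a tree operation $\Theta$ with $\s=\lambda^{-1}\circ\Theta\circ\lambda$ on $\S_n(321)$, and observes that $\Theta$ removes exactly the largest element of each set $\{b>v:\,b\notin T_r(v)\}$, so $\mathsf{g}$ drops by one. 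So the reduction and the iteration at the end of your proposal are correct and match the paper.

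The gap is that the central claim is exactly the part you defer, and the shortcut you hope to invoke is not available as stated. The ``known description of $\s$ as a tree operation'' is Bousquet-M\'elou's: $\s(\pi)$ is the post-order reading of the \emph{decreasing} binary tree of $\pi$, which is a different tree from $\lambda(\pi)$ and does not directly give an operation on $\lambda(\pi)$ whose effect on left-weights you can read off. The paper has to manufacture this from scratch: it first proves a ``dynamic'' description of $\s$ (each descent top $b$, processed in increasing order of value, is relocated to just before the first larger letter to its right), verified against the recursion~\eqref{def:stack rec}, and only then translates this into the operation $\Theta$ on $321$-avoiding trees and checks node by node that the relocations agree. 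That is the bulk of the proof and is missing from your proposal. Moreover, your phrase ``moves each such minimum past exactly one of the larger letters'' hides the two facts that actually need proving: (i) for each right-to-left minimum $a$ with $\lw>0$, its \emph{largest} blocker is a descent top and does cross $a$ under one pass (it is the rightmost blocker, since the non-minima of a $321$-avoiding permutation increase left to right, so its first larger letter on the right lies beyond $a$); and (ii) no other blocker crosses $a$ (a non-largest blocking descent top has a larger blocker to its right but still left of $a$, so it lands before that letter and stays left of $a$), and no new blockers are created since letters only move rightward. Your alternative ``block-by-block'' argument on the canonical decomposition can be completed along exactly these lines and would be a legitimate, slightly more elementary variant of the paper's tree argument; but as written, neither route is carried out, so the proof is incomplete at its crux.
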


The proofs of the above two characterizing  lemmas are technically  involved and will be postponed to  the next  section. 

\subsection{The construction of $\Upsilon$} 
With Lemmas~\ref{lem:213} and~\ref{lem:321} in hand, we can now construct $\Upsilon$. 
 Our bijection $\Upsilon$ is a composition of five bijections, four of which are known and one is an involution on binary trees. 
 
\magenta{{\bf Step 1:}} A {\em Dyck path} of order $n$ is a lattice path in $\N^2$ from $(0,0)$ to
$(n,n)$ using the {\em east step} $(1,0)$ and the {\em north step} $(0,1)$, which does
not pass above the diagonal $y=x$. Let $\D_n$ be the set of all Dyck paths of order $n$. 
Our first step of $\Upsilon$ is a bijection $\rho: \S_n(321)\rightarrow\D_n$ essentially due to Krattenthaler~\cite{Kra}.  It is known that a permutation is $321$-avoiding iff both the subsequence formed by  excedances  and the one formed by the remaining letters are increasing. For $\pi\in\S_n(321)$, we represent $\pi$ on the $n\times n$ grid with crosses on the squares $(i,\pi_i)$ for all $i$; see Fig.~\ref{fig tau} for the representation of $\pi=3\,4\,1\,6\,8\,2\,5\,7\,9\,12\,10\,11\in\S_{12}(321)$. With this representation,  $\rho(\pi)$ is the Dyck path from the lower-left corner to the upper-right corner of the grid leaving all the crosses to the left and remaining always as close to the diagonal as possible. See  Fig.~\ref{fig tau} (in left) for an example of $\rho$. 

\begin{figure}
    \centering
    
    \begin{tikzpicture}[scale=0.7]
\begin{scope}[scale=0.7]
\draw[help lines,step = 1] (0,0) grid (12,12);
\draw[] (0,0)--(12,12);
\draw[red] (2,0)--(3,1);
\draw[red] (2,1)--(3,0);

\draw[green] (5,1)--(6,2);
\draw[green] (5,2)--(6,1);

\draw[red] (6,4)--(7,5);
\draw[red] (6,5)--(7,4);

\draw[red] (7,6)--(8,7);
\draw[red] (7,7)--(8,6);

\draw[red] (8,8)--(9,9);
\draw[red] (8,9)--(9,8);

\draw[red] (10,9)--(11,10);
\draw[red] (10,10)--(11,9);

\draw[red] (11,10)--(12,11);
\draw[red] (11,11)--(12,10);

\draw[thick] (0,0)--(3,0)--(3,1)--(6,1)--(6,4)--(7,4)--(7,6)--(8,6)--(8,8)--(9,8)--(9,9)--(11,9)--(11,10)--(12,10)--(12,12);

\draw[blue] (0,2)--(1,3);
\draw[blue] (0,3)--(1,2);

\draw[blue] (1,3)--(2,4);
\draw[blue] (1,4)--(2,3);

\draw[blue] (3,5)--(4,6);
\draw[blue] (4,5)--(3,6);

\draw[blue] (4,7)--(5,8);
\draw[blue] (5,7)--(4,8);

\draw[blue] (9,11)--(10,12);
\draw[blue] (10,11)--(9,12);

\draw[green] (5,0)--(12,7);

\node at  (-0.8,-0.5){$\pi=$};
\node at  (0.5,-0.5){$3$};
\node at  (1.5,-0.5){$4$};
\node at  (2.5,-0.5){\red{$1$}};
\node at  (3.5,-0.5){$6$};
\node at  (4.5,-0.5){$8$};
\node at  (5.5,-0.5){\green{$2$}};
\node at  (6.5,-0.5){\red{$5$}};
\node at  (7.5,-0.5){\red{$7$}};
\node at  (8.5,-0.5){\red{$9$}};
\node at  (9.5,-0.5){$12$};
\node at  (10.5,-0.5){\red{$10$}};
\node at  (11.5,-0.5){\red{$11$}};
\end{scope}

\begin{scope}[xshift=19cm,yshift=0cm]

\draw[] (-5,7)--(-8,4);
\draw[] (-7,5)--(-7,2);
\draw[] (-7,5)--(-6,4);
\draw[] (-5,7)--(-4,6)--(-4,5);
\draw[] (-6,6)--(-6,5);
\draw[] (-5,7)--(-5,6);
\draw[] (-4,6)--(-3,5);
\draw[green] (-5,7)--(-7,5)--(-7,2);

\node at  (-3,5){\red{$\bullet$}};
\node at  (-5,6){\red{$\bullet$}};
\node at  (-6,5){\red{$\bullet$}};
\node at  (-5,7){$\bullet$};
\node at  (-6,6){$\bullet$};
\node at  (-7,5){$\bullet$};
\node at  (-8,4){\red{$\bullet$}};
\node at  (-7,4){$\bullet$};
\node at  (-7,3){$\bullet$};
\node at  (-7,2){\green{$\bullet$}};
\node at  (-6,4){\red{$\bullet$}};
\node at  (-4,6){{$\bullet$}};
\node at  (-4,5){\red{$\bullet$}};

\node at  (-5.8,4.6){$7$};
\node at  (-5,5.5){$9$};
\node at  (-3,4.5){$11$};
\node at  (-8,3.5){$1$};
\node at  (-7,1.5){$2$};
\node at  (-6,3.5){$5$};
\node at  (-4,4.5){$10$};

\node at  (-9,4.5) {$\rightarrow$};
\node at  (-9,4.8) {$\tau^{-1}$};

\end{scope}
\end{tikzpicture}
    \caption{An example of  $\tau^{-1}\circ\rho$.\label{fig tau}}
\end{figure}
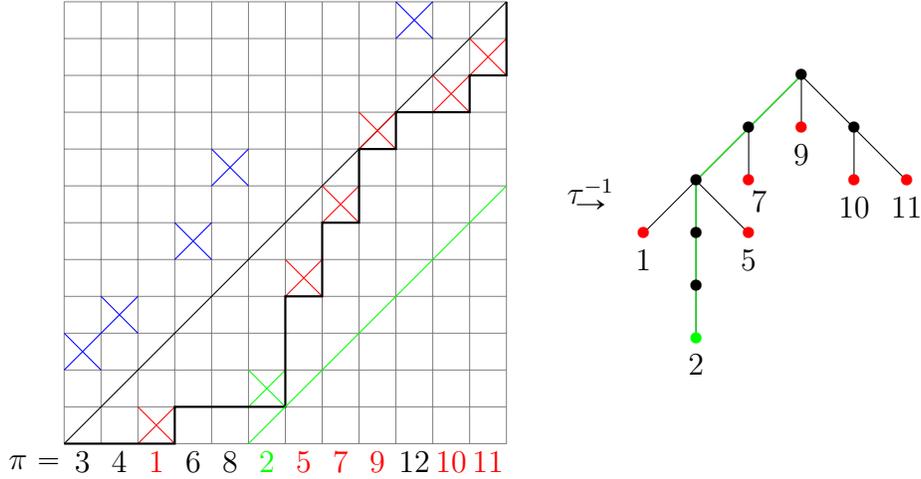

 Let $\D_{n,t}$ denote the set of Dyck paths in $\D_n$ that do not cross below the shifted diagonal $y=x-(t+1)$. By Lemma~\ref{lem:321}, $\rho$ restricts to a bijection between $\S_n^t(321)$ and $\D_{n,t}$.

 \magenta{{\bf Step 2:}} A {\em plane tree} is a rooted tree in which the children of each node are linearly ordered. Denote by $\T_n$ the set of all plane trees with $n$ edges. Our second step of $\Upsilon$ uses a classical  bijection $\tau$ (see~\cite{Deu}) between plane trees and Dyck paths, constructed as follows:
\begin{enumerate}[(i)]
    \item Traverse the tree in {\em pre-order}, that is, visiting the root first, then recursively the left subtree, followed by the right subtree.
    \item For each downward edge traversal, record an east step $(1,0)$.
    \item For each upward edge traversal, record a north step $(0,1)$.
\end{enumerate}
See Fig.~\ref{fig tau} for an example of $\tau$.

For a plane tree $T\in\T_n$, the {\em depth} of $T$ is the maximum number of edges in any root-to-leaf path in $T$. Let $\T_{n,t}$ be the set of all  trees in $\T_n$ with depth  at most $t+1$. Since each leaf of $T$ corresponds to a corner in $\tau(T)$, $\tau$ restricts to a bijection between $\T_{n,t}$ and $\D_{n,t}$. 

\magenta{{\bf Step 3:}} 
For a plane tree $T\in\T_n$,  two nodes with the same parent are called {\em siblings}. Our third step of $\Upsilon$ uses a classical bijection~\cite[p.~9]{St1} between plane trees $\T_n$ and binary trees $\B_n$. For a tree $T\in\T_n$, we define the binary tree $\varphi(T)\in\B_n$ by requiring that for each pair of non-root nodes $(x,y)$ in $T$: 
 \begin{enumerate}
    \item $y$ is the left child of $x$ in $\varphi(T)$ only if  when  $y$ is the leftmost child of  $x$ in $T$;
    \item  $y$ is the right child of $x$ in $\varphi(T)$ only if when $x$ is the closest left sibling of $y$ in $T$.
 \end{enumerate}
See Fig.~\ref{fig gamma} for an example of $\varphi$.

\begin{figure}
\centering
    
\begin{tikzpicture}[scale=0.7]

\node at  (-1.2,4.5){$\rightarrow$};
\node at  (-1.2,4.8){$\varphi$};

\node at  (6.8,4.5){$\rightarrow$};
\node at  (6.8,4.8){$\gamma$};

\draw[] (-5,7)--(-8,4);
\draw[] (-7,5)--(-7,2);
\draw[] (-7,5)--(-6,4);
\draw[] (-5,7)--(-3,5);
\draw[] (-5,7)--(-5,6);
\draw[] (-4,6)--(-4,5);
\draw[] (-6,6)--(-6,5);

\draw[green] (-5,7)--(-7,5)--(-7,2);

\node at  (-6,5){$\bullet$};
\node at  (-5,7){$\bullet$};
\node at  (-6,6){$\bullet$};
\node at  (-7,5){$\bullet$};
\node at  (-8,4){{$\bullet$}};
\node at  (-7,4){$\bullet$};
\node at  (-7,3){$\bullet$};
\node at  (-7,2){{$\bullet$}};
\node at  (-6,4){{$\bullet$}};
\node at  (-4,6){{$\bullet$}};
\node at  (-5,6){$\bullet$};
\node at  (-3,5){$\bullet$};
\node at  (-4,5){$\bullet$};


\begin{scope}[xshift=2.5cm,yshift=0cm]
\draw[] (0,7)--(2,5)--(1,4)--(2,3);
\draw[] (0,7)--(-2,5)--(0,3);
\draw[] (-1,4)--(-3,2);
\draw[] (-1,6)--(0,5);
\draw[green] (0,7)--(-2,5)--(-1,4)--(-3,2);

\node at (0,5){$\bullet$};
\node at (0,7){$\bullet$};
\node at (-1,6){\magenta{$\bullet$}};
\node at (-2,5){$\bullet$};
\node at (-1,4){$\bullet$};
\node at (0,3){$\bullet$};
\node at (-2,3){$\bullet$};
\node at (-3,2){$\bullet$};
\node at (1,6){$\bullet$};
\node at (2,5){$\bullet$};
\node at (1,4){\magenta{$\bullet$}};
\node at (2,3){$\bullet$};
\end{scope}


\begin{scope}[xshift=10cm,yshift=0cm]
\draw[] (0,7)--(2,5)--(0.5,3.5);
\draw[] (0,7)--(-1,6)--(0,5)--(-2,3);
\draw[] (-1,4)--(1,2);
\draw[] (-1,6)--(-2,5);
\draw[green] (-1,6)--(0,5)--(-1,4)--(1,2);

\node at (-2,5){$\bullet$};
\node at (0,7){$\bullet$};
\node at (-1,6){\magenta{$\bullet$}};
\node at (0,5){$\bullet$};
\node at (-1,4){$\bullet$};
\node at (0,3){$\bullet$};
\node at (-2,3){$\bullet$};
\node at (1,2){$\bullet$};
\node at (1,6){$\bullet$};
\node at (2,5){$\bullet$};
\node at (1.25,4.25){\magenta{$\bullet$}};
\node at (0.5,3.5){$\bullet$};
\end{scope}

\end{tikzpicture}
    \caption{An example of $\gamma\circ\varphi$.\label{fig gamma}}
\end{figure}
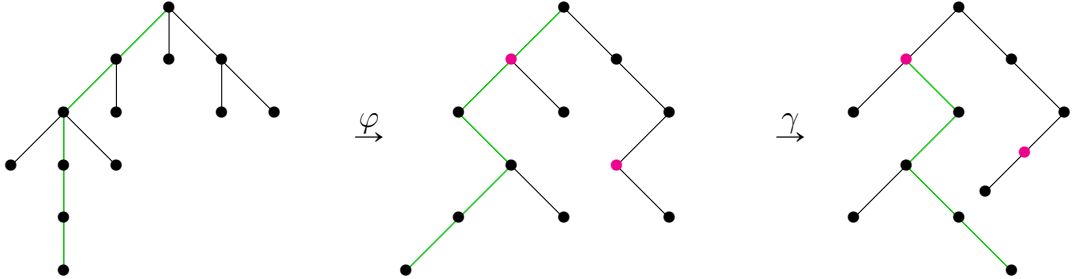

For a binary tree $T\in\B_n$, a {\em longest left path}  of $T$ is a path   that contains the maximum number of left edges. We denote  by $\llp(T)$ the number of left edges in any longest left path of $T$. For the second tree $T$ in Fig.~\ref{fig gamma}, we have $\llp(T)=4$. Let 
$$
\B_{n,t}:=\{T\in\B_n: \llp(T)\leq t\}. 
$$
It is plain to see from the construction that $\varphi$ restricts to a bijection between $\T_{n,t}$ and $\B_{n,t}$. 

\magenta{\bf Step 4:} 
Let $\beta$ denote the {\em mirror symmetry} operation on binary trees, namely if $T$ is a binary tree, then $\beta(T)$ is the mirror symmetry of $T$.  We now define a {\em restricted mirror symmetry} operation $\gamma$ on $\B_n$ that exchanges the  pair of tree statistics $(\llp,\lrrp)$. Given a binary tree $T\in\B_n$, $\gamma(T)$ is obtained by applying mirror symmetry to every subtree whose root is a left child of a node on the right arm of $T$. See Fig.~\ref{fig gamma} for an example of $\gamma$. It is clear that $\gamma$ is an involution on $\B_n$ that exchanges the statistic pair  $(\llp,\lrrp)$. In particular, $\gamma$ restricts to a bijection between $\B_{n,t}$ and 
$$
\widetilde\B_{n,t}:=\{T\in\B_n: \lrrp(T)\leq t\},
$$
 which forms the fourth step of our $\Upsilon$.

\magenta{\bf Step 5:} The step 5 of our $\Upsilon$ uses the bijection $\lambda^{-1}:\B_n\rightarrow\S_n(213)$. Note that when applying $\lambda^{-1}$, each binary tree is assumed to be equipped with labeling using right pre-order (see Fig.~\ref{fig lambda}). By Lemma~\ref{lem:213}, the mapping $\lambda^{-1}$ establishes a bijection between $\widetilde\B_{n,t}$ and $\S_n^t(213)$. 
\vskip0.1in
Now setting 
$$
\Upsilon= \lambda^{-1}\circ\gamma\circ\varphi\circ\tau^{-1}\circ\rho
$$
gives the desired bijection for Theorem~\ref{main:bij}. 

\subsection{The transformation of permutation statistics under $\Upsilon$}
\label{subsection:2.2}
In this subsection, we investigate the transformation of permutation statistics under $\Upsilon$. 
First we prove Theorem~\ref{main:bij}.

\begin{proof}[{\bf Proof of Theorem~\ref{main:bij}}]
It remains  to verify that $\Upsilon$ transforms the pair  $(\fix,\drop)$ on $\S_n^t(321)$ to $(\bad,\des)$ on $\S_n^t(213)$. 

Let $\pi\in\S_n^t(321)$ and $\sigma=\Upsilon(\pi)\in\S_n^t(213)$. Then it is straightforward to verify  that $\fix(\pi)$ equals the number of corners on the diagonal $y=x$ in the Dyck path $\rho(\pi)$, which becomes the number of leaves whose parent is the root in the plane tree  $\tau^{-1}\circ\rho(\pi)$. Then under $\varphi$, this quantity turns to the number of nodes on the right arm that has no left child in the binary tree $\varphi\circ\tau^{-1}\circ\rho(\pi)$, which is preserved under the involution $\gamma$. Finally, it becomes $\bad(\sigma)$ under $\lambda^{-1}$, as desired. The proof that $\drop(\pi)$ turns to $\des(\sigma)$ is similar and will be omitted. This completes the proof of Theorem~\ref{main:bij}. 
\end{proof}

Since our bijection $\Upsilon$ is natural, it transforms  more permutation statistics between $\S_n^t(321)$  and $\S_n^t(213)$ that we now introduce. For the $321$-avoiding permutations side, define 
\begin{itemize}
\item $\pone(\pi)$, the position of the letter $1$ in $\pi$;
\item $\prmi(\pi)$, the number of prime right-to-left minima of $\pi$, where a right-to-left minimum $\pi_{i_k}\in\Rlmi(\pi)=\{\pi_{i_1}<\pi_{i_2}<\cdots<\pi_{i_m}\}$ is {\em prime} if $k=1$ or it satisfies $\pi_{i_k}-1=i_{k-1}$ when $k\geq2$;
\item $\inv(\pi):=|\{(i,j)\in[n]^2: i<j, \pi_i>\pi_j\}|$, the number of {\em inversions} of $\pi$.
\end{itemize}
For example, if $\pi=3\,4\,\red{\bf1}\,6\,8\,\red{\bf2\,5\,7\,9}\,12\,\red{\bf10\,11}$, then $\pone(\pi)=3$, $\prmi(\pi)=3$ and $\inv(\pi)=11$. For the $213$-avoiding permutations side, define 
\begin{itemize}
\item $\rrlmi(\pi)$, the number of restricted right-to-left minima of $\pi$, where a letter $\pi_i$ with $i\leq\pone(\pi)$ is a  {\em restricted right-to-left minimum} if  $\pi_i<\pi_k$ for all $i<k<\pone(\pi)$;
\item $\rlmi(\pi)$, the number of  right-to-left minima of $\pi$;
\item $\rcinv(\pi)$, the number of  {\em restricted coinversions} of $\pi$, where a pair $(i,j)$ with  $\pi_i\leq\pi_j$ is a {\em restricted coinversion} if $\pi_j$ is a descent top of $\pi$ and  $i_{k-1}<i\leq j<i_k$ for some $k$, assuming $\Rlmi(\pi)=\{\pi_{i_1}<\pi_{i_2}<\cdots<\pi_{i_m}\}$ and $i_0=0$.
\end{itemize}
For example, if $\pi=12\,6\,11\,8\,9\,10\,7\,\red{{\bf1}}\,\red{{\bf2}}\,5\,4\,\red{{\bf3}}$, then $\rrlmi(\pi)=3$, $\rlmi(\pi)=3$ and $\rcinv(\pi)=11$.

\begin{proposition}
The bijection $\Upsilon$ transforms the triple $(\pone,\prmi,\inv)$ on $\S_n^t(321)$ to the triple 
$(\rrlmi,\rlmi,\rcinv)$ on $\S_n^t(213)$. 
\end{proposition}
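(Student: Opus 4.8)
The strategy is to track each of the three statistic pairs through the five-step composition $\Upsilon=\lambda^{-1}\circ\gamma\circ\varphi\circ\tau^{-1}\circ\rho$, exactly as in the proof of Theorem~\ref{main:bij}, by identifying a natural intermediate avatar of each statistic on Dyck paths, then on plane trees, then on binary trees, and showing it is invariant (or correctly exchanged) at every stage. Since $\gamma$ only mirrors subtrees hanging to the left of the right arm, a statistic that is intrinsic to the right arm and its ``no-left-child'' pattern (like $\fix\leftrightarrow\bad$ in Theorem~\ref{main:bij}) will survive $\gamma$ untouched; this is the guiding principle for choosing the right intermediate descriptions.

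\textbf{Step-by-step.} First I would handle $\pone\mapsto\rrlmi$. On the grid picture of $\pi\in\S_n(321)$, the letter $1$ sits in column $\pone(\pi)$, and $\rho(\pi)$ must pass below it; one checks that $\pone(\pi)-1$ equals the number of east steps of $\rho(\pi)$ before the path first touches a fixed reference (the lowest point / first return structure), which under $\tau^{-1}$ becomes a structural count on the plane tree — e.g.\ the size or arm-length of the first principal subtree — and then under $\varphi$ becomes the length of the left arm of $\varphi\circ\tau^{-1}\circ\rho(\pi)$, or an equivalent right-arm quantity after $\gamma$, which $\lambda^{-1}$ reads off as the number of restricted right-to-left minima of $\sigma$ (those $\sigma_i$ with $i\le\pone(\sigma)$ that are left-to-right records among the first $\pone(\sigma)-1$ positions). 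Here the key is the observation already used in the paper: $1$ in a $213$-avoider occupies position $\pone(\sigma)$, and the prefix before it is itself $213$-avoiding with a transparent tree meaning. Second, for $\prmi\mapsto\rlmi$: a right-to-left minimum $\pi_{i_k}$ of a $321$-avoider corresponds to a corner of $\rho(\pi)$ on the staircase, and the ``prime'' condition $\pi_{i_k}-1=i_{k-1}$ is precisely the condition that two consecutive such corners are adjacent with no intervening valley, i.e.\ that the corresponding leaves in $\tau^{-1}\circ\rho(\pi)$ lie in distinct principal-like subtrees; translating through $\varphi$ and $\gamma$ this becomes the count of left edges on the left arm (equivalently right edges on the right arm of $\gamma$-image), which $\lambda^{-1}$ interprets as $\rlmi(\sigma)$, the total number of right-to-left minima of $\sigma$. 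Third, for $\inv\mapsto\rcinv$: I would express $\inv(\pi)$ as the area-type statistic between the Dyck path $\rho(\pi)$ and the staircase it determines — since $\pi$ is $321$-avoiding, $\inv(\pi)$ is exactly the number of unit cells between $\rho(\pi)$ and the main diagonal (or equivalently $\sum_j \lw_j$ over appropriate $j$), a classical fact — which under $\tau$ becomes a sum of depths/heights over the plane tree, under $\varphi$ a sum over the binary tree of left-arm lengths of subtrees hanging off the right arm, is preserved by $\gamma$ (mirroring a subtree preserves its internal left/right edge counts in the way needed), and finally under $\lambda^{-1}$ is read as $\rcinv(\sigma)$: the restricted coinversions are precisely the pairs $(i,j)$ inside one ``block'' $(i_{k-1},i_k)$ with $\sigma_j$ a descent top, which count the internal edges of the corresponding binary subtree.

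\textbf{Main obstacle.} The hard part will be the bookkeeping for $\inv\mapsto\rcinv$, because unlike the $0/1$-type record statistics, inversions are a quadratic count and the ``restricted'' qualifier on the $213$-side (pairs confined to a single right-to-left-minimum block, with $\sigma_j$ forced to be a descent top) must be matched exactly against the area contribution localized in one subtree hanging off the right arm. One must verify that $\gamma$, which mirrors such a subtree, neither creates nor destroys the relevant count — intuitively clear since $\llp$ and $\lrrp$ of that subtree are swapped but the subtree's total edge/area content is fixed — and that the in-order reading $\lambda^{-1}$ converts ``internal edges of the $k$-th subtree off the right arm'' into exactly the restricted coinversions with indices in $(i_{k-1},i_k)$. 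A clean way to do this is to prove the identity blockwise: decompose $\pi$ according to its right-to-left minima, show $\rho$, $\tau^{-1}$, $\varphi$ act blockwise so that the $k$-th block of $\pi$ corresponds to the $k$-th subtree off the right arm and to the $k$-th block of $\sigma$, and then it suffices to check each of the three statistics restricted to a single block, which reduces everything to a small self-contained computation on a single $213$-avoiding (or $321$-avoiding) word together with its binary tree.
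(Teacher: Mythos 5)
Your plan is correct and follows essentially the same route as the paper's proof, which likewise treats the $\pone$ and $\prmi$ transformations as immediate from the construction and concentrates on $\inv\mapsto\rcinv$ via the identity $\inv(\pi)=\sum_{\pi_j\in\Rlmi(\pi)}\lw_j(\pi)$, tracking each summand through the tree bijections to the restricted coinversions supported on a single block between consecutive right-to-left minima of $\sigma$. One correction: $\inv(\pi)$ is \emph{not} the full area between $\rho(\pi)$ and the diagonal $y=x$ (for the running example $\pi=3\,4\,1\,6\,8\,2\,5\,7\,9\,12\,10\,11$ that area is $17$ while $\inv(\pi)=11$), so you must rely on your parenthetical alternative --- the sum of $\lw_j(\pi)=j-\pi_j$ over the right-to-left minima only --- which is exactly the identity the paper establishes and uses.
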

\begin{proof}

We only show how $\Upsilon$ transforms $\inv$ on $\S_n^t(321)$  to $\rcinv$ on $\S_n^t(213)$, as the other transformations of statistics are evident from the construction of $\Upsilon$.

Let $\pi\in\S_n(321)$. For each $\pi_j\in\Rlmi(\pi)$, as $\pi$ is $321$-avoiding, the number of inversions of the form  $(i,j)$ with $i<j$ equals $j-\pi_j=\lw_j(\pi)$. On the other hand, if $\pi_j\notin\Rlmi(\pi)$, then all letters to the left of $\pi_j$ are smaller than $\pi_j$. Thus, 
\begin{equation}\label{eq:rcinv}
\inv(\pi)=\sum_{\pi_j\in\Rlmi(\pi)}\lw_j(\pi)
\end{equation}
for $\pi\in\S_n(321)$.

    For any $\pi_j\in\Rlmi(\pi)$ with $j\neq\pi_j$, let $v$ be its corresponding node in $\varphi\circ\tau^{-1}\circ\rho(\pi)=T$. It is straightforward to check that $\lw_j(\pi)$ equals  the number of left edges in the  path from $v$ to the root in $T$. Under the involution $\gamma$, $\lw_j(\pi)-1$ turns to the number of right edges from $v$ to the right arm (terminates at the node $w$ on the right arm) in $\gamma(T)$ and $v$ becomes a right leaf (see Def.~\ref{ri:leaf}) of $\gamma(T)$.  
    Suppose that  $\Upsilon(\pi)=\sigma$ with $\Rlmi(\sigma)=\{\sigma_{i_1}<\sigma_{i_2}<\cdots<\sigma_{i_m}\}$   and the natural labelings of $v$ and $w$ 
    are, respectively,  $\sigma_t$ and $\sigma_{i_k}$. Note that $\sigma_t$ is a descent top of $\sigma$. 
 Finally,  under the bijection $\lambda^{-1}$, $\lw_j(\pi)$ becomes $|\{(s,t): \sigma_s<\sigma_t, i_{k-1}<s\leq t<i_k\}|$, where we use the convention $i_0=0$. This completes the proof that $\inv(\pi)=\rcinv(\sigma)$ in view of~\eqref{eq:rcinv}.  
\end{proof}

\section{Proofs of the two characterizing lemmas}\label{sec:3}

This section aims to prove the two characterizing lemmas, Lemmas~\ref{lem:213} and~\ref{lem:321}.

\subsection{Proof of Lemma~\ref{lem:213}}
In this subsection, we characterize the stack-sorting algorithm for permutations in terms of increasing binary trees and prove  Lemma~\ref{lem:213}.

\begin{definition}\label{ri:leaf}
For a binary tree $T\in\IB_n$, a node is called a {\bf right leaf} if it has no right child and is not belonging to the right arm of $T$.
\end{definition}

\begin{definition}
    For a binary tree $T\in\IB_n$ and a right leaf $v\in T$, the operation $\phi_v$ transforms $T$ by inserting $v$ to the right arm while preserving the increasing property, and then replacing the original position of $v$ with the left child of $v$ (if exists). See Fig.~\ref{fig phi} for an example of the operation $\phi_v$. 
\end{definition}

\begin{figure}
\centering
\begin{tikzpicture}[scale=0.235]
\draw[-](0,20) to (15,11);
\draw[-](0,20) to (-10,14);
\draw[-](-5,17) to (-1,14);
\draw[-](-9,8) to (-1,14);
\draw[-](-5,11) to (-2,8);
\draw[-](-4,6) to (-2,8);
\draw[-](-4,6) to (-1,4);
\draw[-](5,17) to (1,14);
\draw[-](1,14) to (6,11);

\node at  (0,20){$\bullet$};\node at  (0.5,21.2){$1$};
\node at  (-5,17){$\bullet$};\node at  (-5.5,18.2){$5$};
\node at  (-10,14){{$\bullet$}};\node at  (-10.5,15.2){$14$};
\node at  (-1,14){\red{$\bullet$}};\node at  (-1,15.2){$6$};
\node at  (-5,11){\blue{$\bullet$}};\node at  (-5,12.2){$7$};
\node at  (-9,8){{$\bullet$}};\node at  (-9.5,9.2){$13$};
\node at  (-2,8){{$\bullet$}};\node at  (-1.5,9.2){$9$};
\node at  (-4,6){$\bullet$};\node at  (-4.5,7.2){$10$};
\node at  (-1,4){{$\bullet$}};\node at  (-0.5,5.2){$12$};
\node at  (5,17){{$\bullet$}};\node at  (5.5,18.2){$2$};
\node at  (10,14){{$\bullet$}};\node at  (10.5,15.2){$8$};
\node at  (15,11){{$\bullet$}};\node at  (15.5,12.2){$11$};
\node at  (1,14){$\bullet$};\node at  (1,15.2){$3$};
\node at  (6,11){{$\bullet$}};\node at  (6.5,12.2){$4$};

\begin{scope}[xshift=35cm,yshift=0cm]
    \draw[-](0,20) to (15,11);
\draw[-](0,20) to (-10,14);
\draw[-](-5,17) to (-1,14);
\draw[-](15,11) to (20,8);
\draw[-](5,17) to (2,14);
\draw[-](2,14) to (7,11);

\node at  (0,20){$\bullet$};\node at  (0.5,21.2){$1$};
\node at  (-5,17){$\bullet$};\node at  (-5.5,18.2){$5$};
\node at  (-10,14){{$\bullet$}};\node at  (-10.5,15.2){$14$};
\node at  (20,8){{$\bullet$}};\node at  (20.5,9.2){$11$};

\node at  (5,17){{$\bullet$}};\node at  (5.5,18.2){$2$};
\node at  (10,14){\red{$\bullet$}};\node at  (10.5,15.2){$6$};
\node at  (15,11){{$\bullet$}};\node at  (15.5,12.2){$8$};
\node at  (2,14){$\bullet$};\node at  (2,15.2){$3$};
\node at  (7,11){{$\bullet$}};\node at  (7.5,12.2){$4$};
\end{scope}

\begin{scope}[xshift=39cm,yshift=3cm]
    \draw[-](-5,11) to (-2,8);
    \draw[-](-9,8) to (-5,11);
    \draw[-](-4,6) to (-2,8);
    \draw[-](-4,6) to (-1,4);

    \node at  (-5,11){\blue{$\bullet$}};\node at  (-5,12.2){$7$};
    \node at  (-2,8){{$\bullet$}};\node at  (-1.5,9.2){$9$};
    \node at  (-4,6){$\bullet$};\node at  (-4.5,7.2){$10$};
    \node at  (-1,4){{$\bullet$}};\node at  (-0.5,5.2){$12$};
    \node at  (-9,8){{$\bullet$}};\node at  (-9.5,9.2){$13$};
\end{scope}
\node at  (20,11){$\rightarrow$};\node at  (20,12.5){$\phi_6$};
\end{tikzpicture}
\caption{An example of the operation $\phi_6$.\label{fig phi}}
\end{figure}
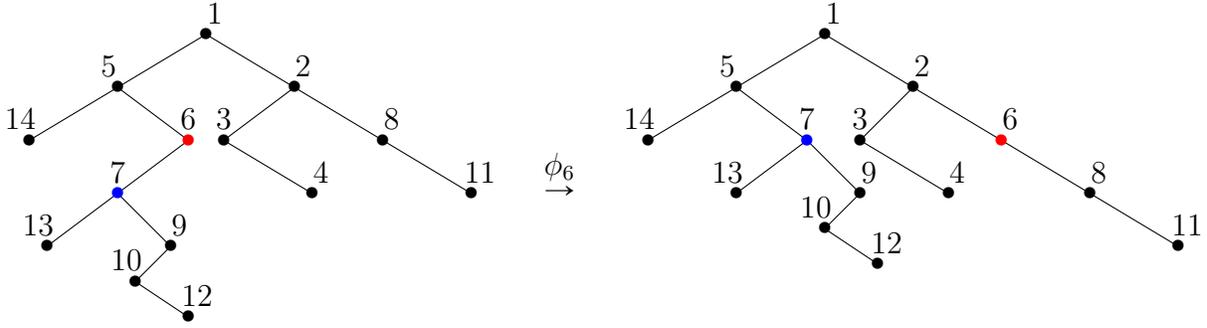

For a binary tree $T\in\IB_n$, the operation $\phi$ is commutative, namely, for any two right leaves $u$ and $v\in T$, we have $\phi_v\circ\phi_u(T)=\phi_u\circ\phi_v(T)$. Let $\RL(T)$ denote the set of all right leaves of $T$. We can define the sorting algorithm on $T$ by 
$$\Phi(T)=\biggl(\prod_{v\in \RL(T)}\phi_v\biggr)(T).$$
See Fig.~\ref{fig:Phi} for an example of the sorting algorithm $\Phi$.
The operator $\Phi$ satisfies the following property.

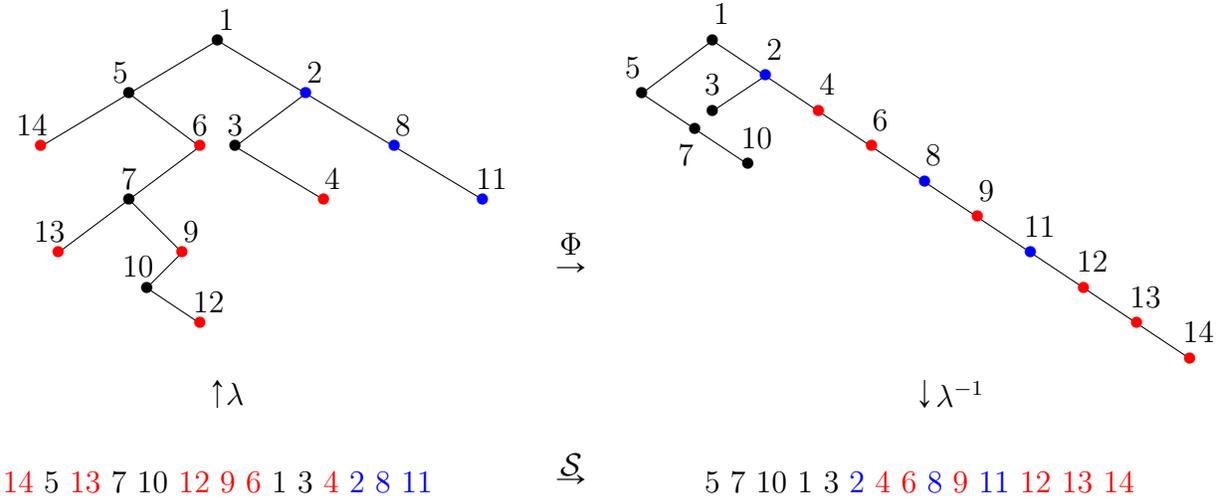
\begin{figure}
\centering
\begin{tikzpicture}[scale=0.235]
\node at  (0,0){$\uparrow$};\node at  (1,0){$\lambda$};
\node at  (0,-5){$\red{14}~ 5~ \red{13}~ 7~ 10~ \red{12~ 9~ 6~} 1~ 3~ \red{4}~ \blue{2~ 8~ 11}$};

\draw[-](0,20) to (15,11);
\draw[-](0,20) to (-10,14);
\draw[-](-5,17) to (-1,14);
\draw[-](-9,8) to (-1,14);
\draw[-](-5,11) to (-2,8);
\draw[-](-4,6) to (-2,8);
\draw[-](-4,6) to (-1,4);
\draw[-](5,17) to (1,14);
\draw[-](1,14) to (6,11);

\node at  (0,20){$\bullet$};\node at  (0.5,21.2){$1$};
\node at  (-5,17){$\bullet$};\node at  (-5.5,18.2){$5$};
\node at  (-10,14){\red{$\bullet$}};\node at  (-10.5,15.2){$14$};
\node at  (-1,14){\red{$\bullet$}};\node at  (-1,15.2){$6$};
\node at  (-5,11){$\bullet$};\node at  (-5,12.2){$7$};
\node at  (-9,8){\red{$\bullet$}};\node at  (-9.5,9.2){$13$};
\node at  (-2,8){\red{$\bullet$}};\node at  (-1.5,9.2){$9$};
\node at  (-4,6){$\bullet$};\node at  (-4.5,7.2){$10$};
\node at  (-1,4){\red{$\bullet$}};\node at  (-0.5,5.2){$12$};
\node at  (5,17){\blue{$\bullet$}};\node at  (5.5,18.2){$2$};
\node at  (10,14){\blue{$\bullet$}};\node at  (10.5,15.2){$8$};
\node at  (15,11){\blue{$\bullet$}};\node at  (15.5,12.2){$11$};
\node at  (1,14){$\bullet$};\node at  (1,15.2){$3$};
\node at  (6,11){\red{$\bullet$}};\node at  (6.5,12.2){$4$};

\node at  (20,-5){$\rightarrow$};\node at  (20,-4){$\s$};
\node at  (40,-5){$5~ 7~ 10~ 1~ 3~ \blue{2~} \red{4~ 6~} \blue{8~} \red{9~} \blue{11~} \red{12~ 13~ 14~}$};
\node at  (40,0){$\downarrow$};\node at  (42,0){$\lambda^{-1}$};
\node at  (20,7){$\rightarrow$};\node at  (20,8.5){$\Phi$};

\draw[-](28,20) to (55,2);
\draw[-](28,20) to (24,17);
\draw[-](24,17) to (30,13);
\draw[-](31,18) to (28,16);

\node at  (28,20){$\bullet$};\node at  (28.5,21.5){$1$};
\node at  (31,18){\blue{$\bullet$}};\node at  (31.5,19.5){$2$};
\node at  (34,16){\red{$\bullet$}};\node at  (34.5,17.5){$4$};
\node at  (37,14){\red{$\bullet$}};\node at  (37.5,15.5){$6$};
\node at  (40,12){\blue{$\bullet$}};\node at  (40.5,13.5){$8$};
\node at  (43,10){\red{$\bullet$}};\node at  (43.5,11.5){$9$};
\node at  (46,8){\blue{$\bullet$}};\node at  (46.5,9.5){$11$};
\node at  (49,6){\red{$\bullet$}};\node at  (49.5,7.5){$12$};
\node at  (52,4){\red{$\bullet$}};\node at  (52.5,5.5){$13$};
\node at  (55,2){\red{$\bullet$}};\node at  (55.5,3.5){$14$};
\node at  (24,17){$\bullet$};\node at  (23.5,18.5){$5$};
\node at  (27,15){$\bullet$};\node at  (26.5,13.5){$7$};
\node at  (30,13){$\bullet$};\node at  (30.5,14.5){$10$};
\node at  (28,16){$\bullet$};\node at  (28,17.5){$3$};

\end{tikzpicture}
\caption{An example of the sorting algorithm $\Phi$.\label{fig:Phi}}
\end{figure}

\begin{observation}\label{prop phi}
     For any $T\in \IB_n$ with $\lrrp(T)\geq1$, we have $\lrrp(\Phi(T))=\lrrp(T)-1$.
\end{observation}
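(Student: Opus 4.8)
The plan is to analyze how the sorting operation $\Phi$, which simultaneously pushes every right leaf onto the right arm via the commuting maps $\phi_v$, affects the longest restricted right paths of $T$. Recall that $\lrrp(T)$ is one plus the maximal number of right edges in a restricted path (a path avoiding the right arm), so the claim $\lrrp(\Phi(T))=\lrrp(T)-1$ asserts that $\Phi$ shaves exactly one right edge off every optimal restricted right path while creating no new longer ones.

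First I would fix a longest restricted right path $P$ in $T$, say with top node $u$ (a node having a left child, off the right arm) and bottom node $w$ (necessarily a right leaf, since $P$ is maximal in right edges and restricted). I would track what happens to the nodes of $P$ under each relevant $\phi_v$. The key local observation is the effect of $\phi_w$: since $w$ is a right leaf, $\phi_w$ removes $w$ from its position (splicing in $w$'s left child, if any) and reattaches $w$ on the right arm; this deletes the last right edge of $P$, leaving a restricted path with one fewer right edge, whose new bottom is the parent of $w$ along $P$ — and that parent is again a right leaf after the surgery (its right child $w$ is gone). I would argue that the other operations $\phi_v$ with $v\notin P$ either act on subtrees disjoint from $P$, or, when $v$ lies below some node of $P$, only modify the portion of the tree strictly below the bottom of the truncated path, hence do not lengthen it; and operations along $P$ itself are handled by the same splicing argument applied from the bottom up. Using commutativity of the $\phi_v$'s (stated in the excerpt) I may apply them in whatever order is convenient — bottom-up along $P$ first — to make the bookkeeping clean. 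This establishes $\lrrp(\Phi(T))\geq \lrrp(T)-1$, and in fact gives the explicit truncated path of length $\lrrp(T)-1$ in $\Phi(T)$.

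For the reverse inequality $\lrrp(\Phi(T))\leq \lrrp(T)-1$, I would take a longest restricted right path $Q$ in $\Phi(T)$ and pull it back to $T$. Every node of $Q$ lies off the right arm of $\Phi(T)$; by the definition of $\phi_v$, the nodes off the right arm of $\Phi(T)$ are exactly the left children of right-arm nodes together with their descendant subtrees, which are precisely the subtrees hanging below the original right leaves of $T$ (with the right leaves themselves now moved up). So $Q$, together with the right edge connecting its top node to the node directly above it — which becomes a genuine right edge inside one of these pre-images in $T$ — assembles into a restricted right path in $T$ with one more right edge than $Q$. Care is needed at the very top: the node of $\Phi(T)$ directly above the top of $Q$ is a right-arm node $r$ whose left child is the top of $Q$; in $T$, $r$ corresponds to a right leaf, and its left child (the top of $Q$) was originally hanging below that right leaf, so in $T$ there is a right edge from that right leaf down into the pre-image of $Q$. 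Tracing this carefully yields a restricted path of $T$ with $\big(\mathrm{(\#right\ edges\ in\ }Q)+1\big)$ right edges, hence $\lrrp(\Phi(T))\le\lrrp(T)-1$.

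The main obstacle I anticipate is the precise combinatorial description of the subtree structure of $\Phi(T)$: one must verify that after pushing \emph{all} right leaves onto the right arm simultaneously, the nodes off the new right arm are organized exactly as ``the old pendant left-subtrees of right leaves, re-rooted,'' and that no two of these interact to create a restricted right path longer than expected. Pinning down this description — essentially an explicit normal form for $\Phi(T)$ — and checking the boundary behavior at the top of the pulled-back path is where the real work lies; once that is in hand, both inequalities follow by the splicing/pullback arguments sketched above. It may be cleanest to prove the structural description of $\Phi(T)$ as a separate claim and then deduce the observation in two short steps.
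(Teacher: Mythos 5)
Your overall strategy (track a longest restricted right path through the splicing and bound $\lrrp$ in both directions) is sound, and the lower bound $\lrrp(\Phi(T))\ge\lrrp(T)-1$ can be completed along the lines you sketch. The clean way to say the ``splicing argument'' is: when a right leaf $v$ with left child $u$ is removed, the two edges $(\mathrm{parent\ of\ }v)\to v\to u$ are replaced by a single edge of the same orientation as the old edge into $v$, so for every node that survives $\Phi$ the number of right edges on the path from the root of its pendant subtree down to that node is unchanged. (One local slip: after $\phi_w$ the parent of $w$ need not be a right leaf, since $w$'s left child is spliced in as its new right child; this does not damage the inequality.)

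The genuine gap is in the upper bound, and it comes from an incorrect structural description of $\Phi(T)$. By definition $\phi_v$ \emph{detaches} the left child of $v$ and places it in $v$'s old position, so a newly inserted right-arm node has \emph{no} left child in $\Phi(T)$; the subtrees hanging off the right arm of $\Phi(T)$ are the spliced versions of the original pendant subtrees (each original pendant subtree with its right leaves deleted and reconnected), not ``the subtrees hanging below the original right leaves.'' Indeed, in Fig.~\ref{fig:Phi} the left subtrees of the right leaves $6$ and $9$ are nested, so they could not simultaneously be pendant subtrees of $\Phi(T)$. Consequently the ``right edge connecting the top of $Q$ to the node directly above it'' does not exist: that node is an old right-arm node, joined to the top of $Q$ by a left edge in $\Phi(T)$ and by a chain of left edges in $T$, so no extra right edge is gained there. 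The extra right edge lives at the \emph{bottom} of $Q$: choosing $Q$ so that its last edge is a right edge, its bottom node $q$ is off the right arm of $\Phi(T)$, hence survived $\Phi$, hence had a right child $q'$ in $T$; since the right-edge count from the pendant root down to $q$ is preserved by the splicing, appending $q'$ to the pullback of $Q$ yields a restricted path of $T$ with one more right edge, giving $\lrrp(\Phi(T))\le\lrrp(T)-1$. This is exactly the content of the paper's one-line proof: the nodes removed by $\Phi$ are precisely those with no right child, and these include every node maximizing the right-edge count within its pendant subtree.
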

\begin{proof}
    The difference between $\lrrp(T)$ and $\lrrp(\Phi(T))$ is caused by the removal of the largest node in the longest restricted right path of $T$.
\end{proof}

For our purpose, we need to introduce  a generalization of   $213$-avoiding permutations. For a permutation $\pi\in\S_n$, we define its {\em tail} as the longest consecutive increasing subsequence ending at $\pi_n$. For example, the tail of the permutation $65129\red{3478}$ is $3478$. We call $\pi$ a {\em tailed $213$-avoiding permutation} if the subsequence obtained by removing its tail, except the smallest element, avoids the pattern $213$.
We denote by $\widetilde{\S}_n(213)$ the set of all  tailed $213$-avoiding permutations in $\S_n$. See Fig.~\ref{fig:Phi} (on the left) for an example of a tailed $213$-avoiding permutation that is not  $213$-avoiding.

Similarly, for a tree $T\in\IB_n$, we define its {\em tail} as the path starting from the largest node in the right arm of $T$ and tracing back to the first node that has left child (if no such node exists, the entire tree is considered as the tail). We call $T$ a {\em tailed binary tree} if, after removing its tail, except the smallest node, and relabeling the remaining nodes by replacing the $i$-th smallest  node with label $i$, the resulting tree is a naturally labeled binary tree. The set of all  tailed binary trees with $n$ nodes is denoted by $\widetilde{\B}_n$. See Fig.~\ref{fig:Phi} (on the left) for an example of a tailed binary tree that is not naturally labeled.

We write $\Tail(\pi)$ (resp.,~$\Tail(T)$) for the tail of the permutation $\pi$ (resp.,~tree $T$).
For $\pi\in\S_n$,  let $\Rlma(\pi):=\{\pi_i: \pi_i>\pi_j\text{ for all $j>i$}\}$ be  the set of all {\em right-to-left maxima } of $\pi$.  The following observation is clear from the construction of $\lambda$.

\begin{observation}\label{ob lambda}
The mapping $\lambda$ restricts  to a bijection between $\widetilde{\S}_n(213)$  and  $\widetilde{\B}_n$. Moreover, if we write $\pi\in \widetilde{\S}_n(213)$ as $\pi=\sigma\cdot\Tail(\pi)$, then
$$\Rlma(\sigma)=\RL(\lambda(\pi)) \quad\text{and} \quad \Tail(\pi)=\Tail(\lambda(\pi)).$$
\end{observation}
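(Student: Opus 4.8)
The plan is to prove Observation~\ref{ob lambda} by unwinding the recursive definition of $\lambda$ and tracking how the tail of a permutation corresponds to the right arm of the associated tree. First I would recall that $\lambda$ is built by writing $w = \sigma' i \tau$ with $i$ the smallest letter, placing $i$ at the root, and recursing on $\sigma'$ (left branch) and $\tau$ (right branch). The crucial structural fact is that the right arm of $\lambda(w)$ is traced out exactly by the sequence of ``running minima read from a certain point'': if $1$ sits at position $p$ in $\pi$, then the root is $1$ and its right subtree is $\lambda(\pi_{p+1}\cdots\pi_n)$; iterating, the right arm consists of $1$ followed by the left-to-right minima of the suffix $\pi_{p+1}\cdots\pi_n$. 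I would first establish this description of the right arm, then show that a node $x$ on the right arm has a left child precisely when $x$ is immediately preceded in $\pi$ (within the relevant suffix) by a larger letter, i.e.\ when $x$ is not the start of an increasing run extending to the end.

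Next I would pin down the tail correspondence. By definition $\Tail(\pi)$ is the longest increasing suffix $\pi_k \pi_{k+1}\cdots\pi_n$; since it is increasing and ends at $\pi_n$, each of $\pi_k,\ldots,\pi_n$ is a right-to-left maximum with nothing ``hanging to the left'' below it inside the suffix, so under the recursion these become a chain of right edges at the bottom of the right arm, each without a left child. Conversely, the first letter $\pi_{k-1}$ before the tail (if $k>1$) satisfies $\pi_{k-1} > \pi_k$ (else the tail would extend), so in the tree the node carrying the value that heads the right arm below this break has a left child --- this is exactly the ``first node with a left child'' in the definition of $\Tail(T)$. Matching these two descriptions node-by-node gives $\Tail(\pi) = \Tail(\lambda(\pi))$ as labeled paths. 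I would present this as a short induction on $n$: strip the tail, apply the inductive hypothesis to the reduced permutation versus the reduced tree, and check that reinserting the increasing suffix appends the same chain on both sides.

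For the statements $\Rlma(\sigma) = \RL(\lambda(\pi))$ and the bijection between $\widetilde\S_n(213)$ and $\widetilde\B_n$, I would write $\pi = \sigma \cdot \Tail(\pi)$ and observe that removing the tail from the tree (except its smallest node) removes precisely the bottom chain of the right arm, exposing as new ``right leaves'' exactly those nodes that were right children hanging off the right arm with no right child of their own --- and by the right-arm description above, the labels of such nodes are exactly the right-to-left maxima of $\sigma$. The condition defining $\widetilde\S_n(213)$ (that $\sigma$ with its non-minimal tail letters deleted avoids $213$) translates, via the already-established fact that $\lambda$ sends $\S_m(213)$ to naturally-labeled binary trees $\B_m$ (stated in the excerpt), into the condition defining $\widetilde\B_n$ (that the detailed, relabeled tree is naturally labeled); the relabeling ``$i$-th smallest node $\mapsto i$'' on the tree side is exactly the order-isomorphism used when we say a subsequence ``avoids $213$'' on the permutation side. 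So the restriction of $\lambda$ is a bijection because $\lambda$ itself is, and it respects these matching subclass conditions.

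The main obstacle I anticipate is bookkeeping the interaction between the two deletions --- removing the tail on the permutation side versus removing the tail-chain on the tree side --- while keeping the smallest element/node in both cases, and verifying that the relabelings agree. Concretely, one must be careful that ``the first node that has a left child'' when tracing back along the right arm corresponds exactly to ``the first descent from the right'' in $\pi$, including the degenerate case where no such node exists (the whole tree is its own tail, matching $\pi$ being the increasing permutation). I would handle the degenerate cases explicitly at the start of the induction, and otherwise the argument is a routine, if slightly fiddly, verification that the recursion for $\lambda$ commutes with tail-removal.
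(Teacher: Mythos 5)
The paper offers no argument for this observation (it is asserted to be ``clear from the construction of $\lambda$''), so the only question is whether your sketch is itself correct. Your overall plan --- describe the right arm, match the two tails, and transport the $213$-condition through the known correspondence $\S_m(213)\leftrightarrow\B_m$ --- is the right one, and your treatment of $\Tail(\pi)=\Tail(\lambda(\pi))$ is essentially sound once two min/max slips are fixed: the right arm of $\lambda(\pi)$ consists of the \emph{right-to-left minima} of $\pi$, not ``$1$ followed by the left-to-right minima of the suffix'' (for $\pi=132$ the left-to-right minima of the suffix $32$ are $3$ and $2$, but $3$ is not on the right arm), and the tail letters $\pi_k,\dots,\pi_n$ are right-to-left \emph{minima}, not maxima.

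The genuine gap is in your justification of $\Rlma(\sigma)=\RL(\lambda(\pi))$. Right leaves are not ``right children hanging off the right arm'': in the paper's Fig.~5 the right leaf $13$ sits three edges deep inside the left subtree of the root and is not adjacent to the right arm at all, so the set you describe is not $\RL(\lambda(\pi))$, and the conclusion ``hence their labels are the right-to-left maxima of $\sigma$'' does not follow from your right-arm description. Moreover, your argument for this identity never uses the $213$-avoidance of $\sigma\cdot\pi_k$, yet the identity fails without it: for $\pi=31425$ one has $\sigma=314$, $\Rlma(\sigma)=\{4\}$, but $\RL(\lambda(\pi))=\{3,4\}$. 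A correct route is: a node $\pi_j$ of $\lambda(\pi)$ has no right child iff $j=n$ or $\pi_j>\pi_{j+1}$, and descent tops are never right-to-left minima, so $\RL(\lambda(\pi))=\DES(\pi)=\DES(\sigma\cdot\pi_k)$; then for the $213$-avoiding word $\sigma\cdot\pi_k$ every descent top must be a right-to-left maximum of $\sigma$ (a larger letter occurring further right, together with the descent, would form a $213$ pattern), while the reverse inclusion $\Rlma(\sigma)\subseteq\DES(\sigma\cdot\pi_k)$ holds trivially. Inserting this lemma repairs the proof; the bijection claim then follows as you outline, provided you also verify (which you only assert) that deleting the tail of $\lambda(\pi)$ above its smallest node yields exactly $\lambda(\sigma\cdot\pi_k)$.
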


Based on  Observation~\ref{ob lambda}, we can translate  the sorting algorithm $\s$ on tailed $213$-avoiding permutations to the operation $\Phi$ on tailed binary trees (see Fig.~\ref{fig:Phi} for an example), as formalized in the following theorem.

\begin{lemma}\label{lemma sort}
    For any $\pi\in\widetilde{\S}_n(213)$, we have $\s(\pi)=\lambda^{-1}\circ\Phi\circ\lambda(\pi)$ and $\s(\pi)\in\widetilde{\S}_n(213)$. 
\end{lemma}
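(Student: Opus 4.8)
The plan is to prove the identity $\s(\pi)=\lambda^{-1}\circ\Phi\circ\lambda(\pi)$ by induction on $n$, using the recursive definition~\eqref{def:stack rec} of $\s$ on the left and a parallel recursive description of $\Phi$ on the right, the dictionary between the two being furnished by Observation~\ref{ob lambda}. First I would fix $\pi\in\widetilde{\S}_n(213)$ and write $T=\lambda(\pi)$; by Observation~\ref{ob lambda}, $T\in\widetilde{\B}_n$, the right leaves of $T$ correspond to the right-to-left maxima of the ``body'' $\sigma$ (where $\pi=\sigma\cdot\Tail(\pi)$), and $\Tail(\pi)=\Tail(T)$. The key structural remark is that the largest letter $n$ of $\pi$ sits on the right arm of $T$ (it is the last letter of the tail), and cutting at $n$ in the sense of~\eqref{def:stack rec} corresponds on the tree side to cutting at the root's left subtree versus the portion of the right arm below — more precisely, writing $\pi=\pi'\,n\,\pi''$, the left child subtree of the root of $T$ encodes (a shift of) $\lambda$ applied to the part of $\pi'$ sitting below the position of $1$, and the various right-leaf insertions $\phi_v$ that $\Phi$ performs distribute over these two pieces.

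The core of the argument is to show that one application of $\Phi$ ``does to $T$ what one application of $\s$ does to $\pi$,'' which I would organize as follows. Step 1: establish that $\Phi$ on a tailed binary tree factors through the left subtree of the root and the complementary piece — i.e. that $\phi_v$ for a right leaf $v$ lying in the root's left subtree only rearranges that subtree together with the root's right arm, and these operations for different $v$ commute (this is the already-noted commutativity of the $\phi_v$'s, which lets us group the product $\prod_{v\in\RL(T)}\phi_v$ in any convenient order). Step 2: match this, under $\lambda^{-1}$, with the recursive unfolding $\s(\pi)=\s(\pi_1\cdots\pi_{i-1})\,\s(\pi_{i+1}\cdots\pi_n)\,n$ where $\pi_i=n$; here one uses that $\pi_{i+1}\cdots\pi_n$ is (after the initial segment up to position of $1$) exactly the tail-related portion, and that removing the largest node from the right arm via all the $\phi_v$'s reproduces the ``$\cdots n$'' at the end together with the recursively sorted pieces in front. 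Step 3: invoke the inductive hypothesis on the two strictly shorter pieces — each of which, I must check, again lies in $\widetilde{\S}_n(213)$ (for the ``body'' piece) or is trivially handled (the tail piece sorts to itself).

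The second assertion, $\s(\pi)\in\widetilde{\S}_n(213)$, I would get as a byproduct: since $\s(\pi)=\lambda^{-1}(\Phi(T))$ and $\Phi(T)\in\IB_n$ is by construction a binary tree whose body (after stripping its tail) is again naturally labeled — the insertions $\phi_v$ push right leaves onto the right arm and promote their left children, an operation that preserves the ``tailed binary tree'' shape by the definition of $\widetilde\B_n$ — Observation~\ref{ob lambda} gives $\lambda^{-1}(\Phi(T))\in\widetilde\S_n(213)$ immediately. Thus it suffices to verify the closure $\Phi(\widetilde\B_n)\subseteq\widetilde\B_n$, which is a direct structural check on how $\phi_v$ acts near the right arm.

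\textbf{Main obstacle.} The delicate point is the bookkeeping in Step 2: verifying that the composite of \emph{all} the right-leaf insertions $\phi_v$, applied simultaneously, really factors along the decomposition of $\pi$ at its maximum $n$ in a way compatible with the recursion~\eqref{def:stack rec} — in particular, that a right leaf $v$ of $T$ whose left subtree is nonempty gets its left child ``left behind'' in exactly the position that the recursive $\s$ would place it, and that the relative order among the right leaves (which becomes the order in which their labels appear on the right arm of $\Phi(T)$, hence at the end of $\s(\pi)$) matches the increasing order forced by stack-sorting. I expect this to require a careful but elementary case analysis comparing, letter by letter, the in-order readout of $\Phi(T)$ with $\s(\pi)$, localized around the position of $1$ and the right-to-left maxima of the body. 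Everything else — commutativity of the $\phi_v$, the behavior of tails, the base case $n\le 1$ — is routine.
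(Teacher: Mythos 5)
Your overall target is right---you want to show that $\Phi$ does to $\lambda(\pi)$ exactly what $\s$ does to $\pi$---but the inductive skeleton you build this on rests on a false structural claim, and that is a genuine gap rather than mere bookkeeping. You assert that the largest letter $n$ sits on the right arm of $T=\lambda(\pi)$ as the last letter of the tail, and that splitting $\pi=\pi'\,n\,\pi''$ as in \eqref{def:stack rec} corresponds to splitting $T$ at its root. Neither is true: $\lambda$ roots the tree at the \emph{minimum}, so the right arm of $T$ consists of the right-to-left minima of $\pi$ and the root's left subtree encodes the prefix of $\pi$ before the position of $1$, not before the position of $n$; the letter $n$ lies on the right arm only when $\pi_n=n$. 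For the paper's own example $\pi=14\,5\,13\,7\,10\,12\,9\,6\,1\,3\,4\,2\,8\,11$ one has $n=14$ in the first position, so $\pi''$ is essentially all of $\pi$ (certainly not ``the tail-related portion''), and $14$ is a leaf far from the right arm. Because the recursion of $\s$ splits at the maximum while the recursion defining $\lambda$ splits at the minimum, Steps 1--2 of your plan (factoring $\Phi$ along the root's subtrees and matching it with $\s(\pi)=\s(\pi')\s(\pi'')n$) cannot be carried out as stated: the two decompositions do not line up, and no letter-by-letter comparison ``around the position of $1$'' repairs a recursion anchored at the wrong letter.

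The paper sidesteps this by not inducting on \eqref{def:stack rec} at all. Writing $\pi=\sigma\cdot\Tail(\pi)$ and decomposing $\sigma=I_1\sigma_{i_1}\cdots I_m\sigma_{i_m}$ along the right-to-left maxima of $\sigma$, it first derives a \emph{global} description of $\s(\pi)$: the letters of $\Rlma(\sigma)$ are extracted and merged into $\Tail(\pi)$ to form one increasing suffix, while every other letter keeps its position. On the tree side, $\Phi$ moves exactly the right leaves of $T$ onto the right arm (into the tail) and preserves the in-order positions of all other nodes; since Observation~\ref{ob lambda} identifies $\RL(T)$ with $\Rlma(\sigma)$ and $\Tail(T)$ with $\Tail(\pi)$, the two descriptions coincide, and the tailed-$213$-avoidance of $\s(\pi)$ falls out of the same picture. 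Your correct observations (commutativity of the $\phi_v$, closure of $\widetilde{\B}_n$ under $\Phi$, the tail sorting to itself) survive, but the max-based induction must be replaced by a direct description keyed to the right-to-left maxima of the body.
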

\begin{proof}
    Given $\pi\in\widetilde{\S}_n(213)$ of  the form $\pi=\sigma\cdot\Tail(\pi)$, we can decompose $\pi$ as 
    $$\pi=I_1\sigma_{i_1} I_2\sigma_{i_2}\cdots I_m\sigma_{i_m}\cdot\Tail(\pi),$$
    where $\Rlma(\sigma)=\{\sigma_{i_1}>\sigma_{i_2}>\cdots>\sigma_{i_m}\}$ and each $I_j$ is an increasing subsequence whose letters are smaller than $\sigma_{i_j}$. 
    Based on this decomposition and the recursive definition of the stack-sorting operation $\s$, we see that  $\s(\pi)$ can be obtained as follows:
    \begin{enumerate}[(i)]
    \item All elements in $\Rlma(\sigma)$ are inserted into $\Tail(\pi)$ to form a new increasing sequence.
    \item The remaining part of $\sigma$ stays unchanged, with the new increasing sequence appended at the end.
    \end{enumerate}
    The resulting permutation $\s(\pi)$ is still a  tailed $213$-avoiding permutation. For example, if $\pi=\red{14}\,5\,\red{13}\,7\,10\,\red{12\,9\,6}\,1\,3\,\red{4}\,\blue{2\,8\,11}$, then $\s(\pi)=5\,7\,10\,1\,3\,\blue{2}\,\red{4\,6}\,\blue{8}\,\red{9}\,\blue{11}\,\red{12\,13\,14}$. 

    The operation $\Phi$ on $\lambda(\pi)$ moves exactly the nodes in $\RL(\lambda(\pi))$ to the right arm while preserving the projecting positions of the other nodes. Since $\lambda(\pi)$ is a tailed binary tree, the nodes in $\RL(\lambda(\pi))$ are moved into the tail of  $\lambda(\pi)$. The result then follows from Observation~\ref{ob lambda}.
\end{proof}

\begin{remark}
Note that $\pi\in\S_n(213)$ does not guarantee that $\s(\pi)\in\S_n(213)$. Thus, the introduction of the extension $\widetilde\S_n(213)$ is necessary in our approach. 
\end{remark}

We are ready for the proof of  Lemma~\ref{lem:213}.
\begin{proof}[{\bf Proof of  Lemma~\ref{lem:213}}]
By Observation~\ref{prop phi} and Lemma~\ref{lemma sort}, we conclude that $\pi\in\widetilde{\S}_n(213)$ is $t$-stack-sortable iff $\lrrp(\lambda(\pi))\leq t$. Lemma~\ref{lem:213} then follows from the fact that $\S_n(213)\subseteq\widetilde{\S}_n(213)$. 
\end{proof}

\subsection{Proof of Lemma~\ref{lem:321}}

This subsection deals with Lemma~\ref{lem:321}. 
First, we describe the images, named $321$-avoiding trees, of the  $321$-avoiding permutations under the bijection $\lambda$.

Given an increasing binary tree $T$, a {\em right chain} in $T$  is any maximal path (except the right arm) composed of only right edges. 
If a right chain whose leading node is the left child of $v$, then this right chain is denoted by $C_v$. For instance, $C_3$ is the right chain $7-8-9$ in Fig.~\ref{fig:321-tree}. 
For any two right chains $C_v$ and $C_w$ in $T$, we write $C_v<C_w$ if every label in $C_v$ is smaller than every label in $C_w$.

\begin{definition}[$321$-avoiding trees]
An increasing binary tree $T\in\IB_n$ is called {\em a $321$-avoiding tree} if it satisfies
\begin{enumerate}
\item the path from the root to any right leaf  contains exactly  one left edge (equivalently, every right chain is attached to a node in the right arm);

\item for any two right chains $C_v$ and $C_w$ in $T$, $C_v<C_w$ iff $v<w$.
\end{enumerate}
See Fig.~\ref{fig:321-tree} for an example of $321$-avoiding tree. 
\end{definition}
Denote by $\LB_n$ the set of all $321$-avoiding trees in $\IB_n$.  We shall see that $\lambda$ restricts to a bijection between $\S_n(321)$ and $\LB_n$.

\begin{figure}
\centering
\begin{tikzpicture}[scale=0.18]
\draw[-](0,20) to (30,2);
\draw[-](0,20) to (-10,15);
\draw[-](5,17) to (1,14);
\draw[-](20,8) to (15,5);
\draw[-](25,5) to (20,2);
\draw[-](23,-2) to (20,2);
\draw[-](-10,15) to (-4,5);
\draw[-](1,14) to (7,4);

\node at  (0,20){$\bullet$};\node at  (1,21){$1$};
\node at  (5,17){$\bullet$};\node at  (6,18){$3$};
\node at  (10,14){$\bullet$};\node at  (11,15){$6$};
\node at  (15,11){$\bullet$};\node at  (16,12.5){$10$};
\node at  (20,8){$\bullet$};\node at  (21,9.5){$11$};
\node at  (25,5){$\bullet$};\node at  (26,6.5){$13$};
\node at  (30,2){$\bullet$};\node at  (31,3.5){$15$};
\node at  (-10,15){$\bullet$};\node at  (-11,16){$2$};
\node at  (-7,10){$\bullet$};\node at  (-8,9){$4$};
\node at  (-4,5){\red{$\bullet$}};\node at  (-5,4){$5$};
\node at  (1,14){$\bullet$};\node at  (0,13){$7$};
\node at  (4,9){$\bullet$};\node at  (3,8){$8$};
\node at  (7,4){\red{$\bullet$}};\node at  (6,3){$9$};
\node at  (15,5){\red{$\bullet$}};\node at  (13.6,4){$12$};
\node at  (23,-2){\red{$\bullet$}};\node at  (24.5,-1.7){$16$};
\node at  (20,2){$\bullet$};\node at  (18.6,1){$14$};
\end{tikzpicture}
\caption{A $321$-avoiding tree in $\LB_{16}$.\label{fig:321-tree}}
\end{figure}
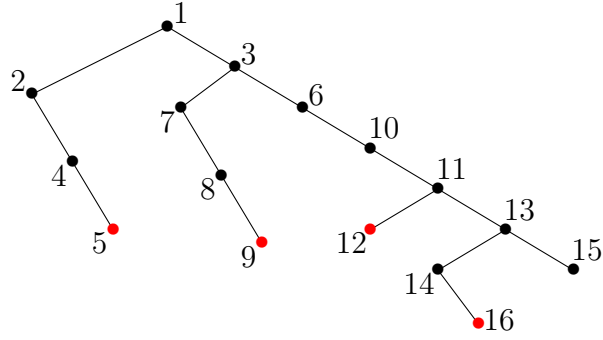

\begin{remark}
For any $T\in \LB_n$, a node $v$ is a right leaf in $T$ iff $v$ is the last element in some right chain in $T$.
\end{remark}

Any permutation $\pi\in\S_n(321)$ admits a unique decomposition as
\begin{equation}\label{eq:cannon}
\pi=I_1 a_1 I_2 a_2 I_3 a_3\cdots I_l a_l,
\end{equation}
where 
\begin{itemize}
\item[(1)] $\Rlmi(\pi)=\{a_1<a_2<\cdots<a_l\}$;
\item[(2)] the concatenation $I_1I_2\cdots I_l$ is an increasing sequence, where $I_j$ can be empty.
\end{itemize}
This decomposition is called the {\em canonical decomposition} of $321$-avoiding permutations. For instance, $\pi=245\red{1}789\red{3}\red{6}\in\S_{9}(321)$, where the non-right-to-left minima are increasing from left to right.
Based on the canonical decomposition of $321$-avoiding permutations and the definition of the map $\lambda$, the following observation on $\lambda$ can be checked routinely.

\begin{observation} \label{lem:bijpro}
The map $\lambda$ is a bijection from $\S_n(321)$ to $\LB_n$. More precisely, for $\pi\in \S_n(321)$ with canonical decomposition~\eqref{eq:cannon} and $T=\lambda(\pi)$, we have
\begin{itemize}
\item $\pi_i$ is a right-to-left minimum of $\pi$ iff $\pi_i$ is on the right arm of $T$;
\item $\pi_i$ and $\pi_j$ belong to the same part $I_k$ iff $\pi_i$ and $\pi_j$ are located at the same right chain of $T$;
\item  $\pi_i \in \DES(\pi)$  iff $\pi_i$ is a right leaf in $T$, where $\DES(\pi)$ is the set of descent tops of $\pi$.
\end{itemize}
\end{observation}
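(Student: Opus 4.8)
The plan is to verify Observation~\ref{lem:bijpro} by unwinding the recursive definition of $\lambda$ against the canonical decomposition~\eqref{eq:cannon}, proving the three bulleted properties first and deducing the bijectivity claim as a consequence. I would set up a single induction on $n$. The base case $n\leq 1$ is trivial, so suppose $\pi\in\S_n(321)$ has canonical decomposition $\pi=I_1 a_1 I_2 a_2\cdots I_l a_l$. The letter $1$ is necessarily the first right-to-left minimum, i.e.\ $a_1=1$ (since $I_1I_2\cdots I_l$ is increasing and every entry to the left of where $1$ sits would have to be smaller, forcing $I_1=\emptyset$ unless $1$ already appeared); write $\pi=I_1\,1\,\tau$ where $\tau=I_2 a_2\cdots I_l a_l$ and $I_1$ consists only of the smallest non-right-to-left-minima. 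Then $\lambda(\pi)=(\lambda(I_1),1,\lambda(\tau))$, so the root of $T$ is $1$, its left branch is the increasing binary tree of the strictly increasing word $I_1$ (a single right chain hanging off the root, or empty), and its right branch is $\lambda(\tau)$.

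The key structural point is that $\tau$ is again $321$-avoiding with canonical decomposition $\tau=I_2 a_2\cdots I_l a_l$ and, crucially, that the right branch $\lambda(\tau)$ of $T$, reattached at the root along a right edge, has its right arm equal to the right arm of $T$ minus the root; inductively the right-to-left minima $a_2<\cdots<a_l$ of $\tau$ sit on the right arm of $\lambda(\tau)$, hence together with the root $1$ they are exactly the nodes on the right arm of $T$, giving the first bullet. For the second bullet, I would observe that $I_1$ forms the first right chain $C_1$ hanging off the root $1$, while by induction the letters of $I_2,\ldots,I_l$ form the right chains of $\lambda(\tau)$, each attached (after reattachment) to a node on the right arm of $T$; the order condition $C_v<C_w\iff v<w$ follows because $I_1I_2\cdots I_l$ is increasing and each $I_k$ is slotted, in order, beneath successively larger right-arm nodes. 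The third bullet is then immediate: an entry is a descent top iff it is the last (largest) letter of some nonempty $I_k$ — the letter immediately preceding the next right-to-left minimum — and by the second bullet this is exactly the last element of a right chain, i.e.\ a node with no right child that is off the right arm, which is precisely a right leaf by Definition~\ref{ri:leaf}.

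Having established the three bullets, I would check that $T=\lambda(\pi)$ satisfies the two defining conditions of a $321$-avoiding tree: condition~(1) that every root-to-right-leaf path has exactly one left edge is exactly the first bullet (right-arm nodes lie on a pure right path, and each right chain hangs off a right-arm node by a single left edge), and condition~(2) is the second bullet. Conversely, given $T\in\LB_n$ I would read off its in-order projection $\lambda^{-1}(T)$ and verify it lies in $\S_n(321)$ and that $\lambda(\lambda^{-1}(T))=T$: the right arm projects to the increasing sequence of right-to-left minima, the right chains project in between in increasing blocks by condition~(2), and the union of the non-right-to-left-minima is increasing, so by Krattenthaler's characterization (excedances increasing, non-excedances increasing — here phrased via the canonical decomposition) the word is $321$-avoiding. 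Since $\lambda:\S_n\to\IB_n$ is already known to be a bijection, it suffices to show $\lambda(\S_n(321))\subseteq\LB_n$ and $\lambda^{-1}(\LB_n)\subseteq\S_n(321)$, which the above provides.

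The main obstacle I anticipate is bookkeeping the reattachment step correctly: when one passes from $\lambda(\tau)$ to $T$, the node $1$ becomes the new root and $\lambda(\tau)$ becomes its right subtree, so one must be careful that ``right arm of $T$'' and ``right chains attached to the right arm of $T$'' correspond cleanly to ``right arm of $\lambda(\tau)$ plus the root'' and ``$C_1=I_1$ together with the right chains of $\lambda(\tau)$'' — in particular that no right chain of $\lambda(\tau)$ accidentally merges with $C_1$ or with the right arm under reattachment. This is where the hypothesis that $I_1$ contains only the very smallest non-right-to-left minima, and that $I_1I_2\cdots I_l$ is globally increasing, does the real work, and I would state it as an explicit sub-lemma before running the induction. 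Everything else is a routine translation between the recursion for $\lambda$ and the block structure~\eqref{eq:cannon}, so I would present it tersely and refer the reader to Fig.~\ref{fig:321-tree} for the dictionary.
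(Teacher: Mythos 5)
Your proposal is correct. The paper itself offers no proof of Observation~\ref{lem:bijpro} --- it is declared to be ``checked routinely'' from the canonical decomposition~\eqref{eq:cannon} and the definition of $\lambda$ --- so your write-up supplies an argument the authors omit. Your route, an induction on $n$ that peels off $I_1\,a_1$ (with $a_1=1=\min\pi$) and matches the recursion $\lambda(\pi)=(\lambda(I_1),1,\lambda(\tau))$ block-by-block against~\eqref{eq:cannon}, is a clean way to carry out that check: it correctly identifies $C_{a_k}$ with $I_k$, derives the right-arm/right-chain dichotomy and the ordering condition $C_v<C_w\iff v<w$ from the global monotonicity of $I_1I_2\cdots I_l$, and gets the descent-top/right-leaf correspondence for free since the descent tops of a $321$-avoiding permutation are exactly the last letters of the nonempty blocks $I_k$. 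The only alternative the authors seem to have had in mind is the direct, non-inductive verification: if $\lambda(\pi)$ violated either defining condition of a $321$-avoiding tree one reads off a $321$ pattern in $\pi$ from two stacked left edges or from two misordered right chains, and conversely the in-order word of any $T\in\LB_n$ splits into two increasing subsequences (right-arm nodes and the rest) and hence avoids $321$; your proof contains this converse direction essentially verbatim. Two small points worth tightening: the parenthetical justification that $a_1=1$ is garbled (the correct one-line reason is simply that $1$ is always a right-to-left minimum and is the smallest such), and when you invoke ``Krattenthaler's characterization'' for the converse it is cleaner to say directly that a word covered by two increasing subsequences cannot contain a decreasing subsequence of length three. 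Neither affects correctness.
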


Next, we  turn to characterizing the stack-sorting map $\s$ on $321$-avoiding trees under $\lambda$.
In addition to its recursive definition given in \eqref{def:stack rec}, the map $\s$ was originally defined by West \cite{West} as the output of passing $\pi$ through a single stack. Bousquet-M\'elou \cite{Bous} further observed that the image $\s(\pi)$ coincides with the post-order reading word $P(T)$, where $T$ is the {\em decreasing binary tree} whose in-order reading word is $\pi$.
For our purpose,   we present a {\em dynamic description} of the stack-sorting map $\s$ on permutations.


For any permutation $\pi\in\S_n$, the output permutation $\s'(\pi)$ is obtained by relocating each descent top $\pi_i\in\DES(\pi )$ in a specified order. 
For each element $\pi_i\in\DES(\pi)$, let $\rfp(\pi_i)$ be the right-first element larger than $\pi_i$ (if it exists).
Each relocation corresponding to the descent top $\pi_i$, denoted by $\s_{\pi_i}$, is carried out according to the following rules:

\begin{itemize}
\item[(1)] place $\pi_i$ immediately before $\rfp(\pi_i)$, if such an entry exists;
\item[(2)] if no such entry exists (i.e., $\pi_i$ is a right-to-left maximum), then place $\pi_i$ at the end of the output.
\end{itemize}
Set $\s'(\pi)=\s_{b_k}\circ\s_{b_{k-1}}\circ\cdots\circ\s_{b_1}(\pi)$, where $\DES(\pi)=\{b_1<b_2<\cdots<b_k\}$.

\begin{example} 
Let $\pi= \red{8}45\red{962}1\red{7}3$, we have 
$\s_{2}(\pi)=\red{8}45\red{96}12\red{7}3$,
$\s_{6}\circ\s_{2}(\pi)=\red{8}45\red{9}126\red{7}3$,
$\s_{7}\circ\s_{6}\circ \s_2(\pi)=\red{8}45\red{9}12637$,
$\s_{8}\circ\s_{7}\circ \s_6\circ\s_{2}(\pi)=458\red{9}12637$
and $\s'(\pi)=458126379$.
\end{example}

\begin{proposition}\label{def:dyn}
For any $\pi\in\S_n$, the output permutation $\s'(\pi)$ coincides with $\s(\pi)$.
\end{proposition}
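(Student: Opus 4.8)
The plan is to show by induction on $n$ that the dynamic description $\s'$ agrees with the recursive description $\s$ from \eqref{def:stack rec}. Fix $\pi=\pi_1\cdots\pi_n\in\S_n$ and locate the maximal letter, say $\pi_i=n$, so that $\pi=\alpha\, n\,\beta$ with $\alpha=\pi_1\cdots\pi_{i-1}$ and $\beta=\pi_{i+1}\cdots\pi_n$. By \eqref{def:stack rec} we must show $\s'(\pi)=\s'(\alpha)\,\s'(\beta)\,n$, invoking the inductive hypothesis $\s(\alpha)=\s'(\alpha)$ and $\s(\beta)=\s'(\beta)$ (after order-isomorphism, since $\s'$ is clearly defined up to relabeling).

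First I would record the basic structural facts about how the relocations $\s_{\pi_j}$ interact with $n$. Since $n$ is a right-to-left maximum, it is its own $\rfp$-target only in the degenerate sense; more importantly, $\rfp(\pi_j)$ for a descent top $\pi_j$ lying in $\alpha$ is never $n$ unless $\pi_j=\pi_{i-1}$ is the letter immediately preceding $n$, and even then, once $n$ is itself relocated to the end, the letter $\pi_{i-1}$ (if it was a descent top because of $n$) must find a new target. The cleanest way to handle this is to process the descent tops in \emph{increasing order of value}, as prescribed, and argue two things: (i) throughout the process, the letter $n$ stays to the right of every letter of $\alpha$ and to the left of every letter of $\beta$ until $n$ itself is moved; (ii) if we delete $n$ from the configuration at every stage, the relocations restricted to the letters of $\alpha$ and those restricted to the letters of $\beta$ never interact with each other, because no letter of $\beta$ can serve as $\rfp$ of a descent top originally in $\alpha$ (every letter of $\alpha$ that is a descent top is $<n$, and the only letters between it and the part of $\beta$ are either letters of $\alpha$ or $n$ itself). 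Concretely, $\rfp(\pi_j)$ for $\pi_j$ a descent top in $\alpha$ is the first letter to its right that exceeds it; if this letter were in $\beta$, then $n$ would lie strictly between $\pi_j$ and that letter and would already be a valid (smaller-indexed, larger-valued) candidate, contradiction — unless $n$ has already been relocated to the end, but $n$ is the last descent top value to be processed only if it is a descent top at all, and in any case once $n$ moves to the end it sits to the right of all of $\beta$, so it still cannot separate $\pi_j$ from a letter of $\beta$. Thus the relocations split cleanly.

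Having established the splitting, I would conclude as follows: the relocations indexed by descent-top values coming from $\alpha$ act only among the letters of $\alpha$ and, by the inductive hypothesis applied to the pattern of $\alpha$, transform $\alpha$ into $\s(\alpha)$; likewise those from $\beta$ transform $\beta$ into $\s(\beta)$; and the single relocation $\s_n$ (present exactly when $n$ is a descent top, i.e.\ when $\beta$ is nonempty) moves $n$ to the very end. One must also check that the letter $\pi_{i-1}$, if it was a descent top, is correctly handled: it is a descent top of $\pi$ iff $\pi_{i-1}>\pi_i^{+}$ where $\pi_i^+$ is... no — it is always a descent top since $\pi_{i-1}<n=\pi_i$ is false; rather $\pi_{i-1}$ is a descent top of $\pi$ iff it exceeds its successor, but its successor is $n$, so $\pi_{i-1}$ is \emph{never} a descent top of $\pi$ at position $i-1$. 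Hence the descent tops of $\pi$ are precisely the descent tops of $\alpha$, the descent tops of $\beta$, and $n$ (the latter iff $\beta\neq\emptyset$), and their relative processing order (increasing by value) refines into: first all of $\alpha$'s and $\beta$'s interleaved, then $n$ last. Running the $\alpha$-moves and $\beta$-moves in any order consistent with increasing value yields $\s'(\alpha)$ followed by $\s'(\beta)$ with $n$ still wedged between them; the final move $\s_n$ appends $n$. This gives $\s'(\pi)=\s'(\alpha)\,\s'(\beta)\,n=\s(\alpha)\,\s(\beta)\,n=\s(\pi)$, completing the induction.

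\textbf{Main obstacle.} The delicate point is item (ii) above: verifying rigorously that the relocation streams for $\alpha$ and $\beta$ never interfere, i.e.\ that no intermediate configuration ever lets a letter originally in $\beta$ become the $\rfp$-target of a descent top originally in $\alpha$, and symmetrically. This requires tracking the invariant ``$n$ separates the (current images of) $\alpha$-letters from the $\beta$-letters'' across all relocation steps and checking it is preserved by each $\s_{\pi_j}$ — in particular that a relocation inside $\alpha$ never pushes an $\alpha$-letter past $n$, which holds because $\rfp$ of an $\alpha$-descent-top is found before reaching $n$ (as $n$ is larger than everything). The bookkeeping is routine but must be done carefully because the definition of $\rfp$ is relative to the \emph{current}, partially-sorted word, not the original $\pi$.
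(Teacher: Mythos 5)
Your proposal is correct and follows essentially the same route as the paper's own proof: both decompose $\pi=\pi_L\,n\,\pi_R$ around the maximum, observe that descent tops originating on either side of $n$ stay on that side (so the corresponding relocations act on disjoint blocks and commute), and conclude that $\s'$ satisfies the recursion $\s'(\pi)=\s'(\pi_L)\s'(\pi_R)n$ defining $\s$. Your write-up is more detailed on the key invariant (that $n$ blocks any $\rfp$-search from the left block from reaching the right block), which is exactly the point the paper's proof states without elaboration.
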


\begin{proof}
We write $\pi$ as $\pi=\pi_Ln\pi_R$, where the part $\pi_L$ (resp.,~$\pi_R$) is the subword before (resp.,~after) the element $n$. We distinguish the following two cases. 
\begin{itemize}
\item If $\pi_R=\emptyset$, then $\s'(\pi)=\s'(\pi_L)n$.
\item Otherwise, we have $b_k=n$.
If a descent top $b_i$ ($i\neq k$) occurs before (resp.,~after) $n$ in $\pi$,  then it remains before (resp.,~after) $n$ in the permutation $\s_{b_{k-1}}\circ\cdots\circ\s_{b_1}(\pi)$. This property ensures that for any $b_i$ occurring before $n$ and any $b_j$ occurring after $n$, the operations $\s_{b_i}$ and $\s_{b_j}$ are commutative, i.e., $\s_{b_i}\circ\s_{b_j}=\s_{b_j}\circ\s_{b_i}$.
As a result, we obtain the recursive decomposition $\s'(\pi)=\s'(\pi_L)\s'(\pi_R)n$.
\end{itemize}
Both cases satisfy the same recursion as $\s(\pi)$, which completes the proof. 
\end{proof}

For a $321$-avoiding tree $T\in \LB_n$, denote by $\RA(T)$ the set of all nodes in the right arm of $T$. For $u\in\RA(T)$, let $T_r(u)$ denote the right subtree of the node $u$.   Given a node $v\in\RL(T)$,  suppose that $v$ is the last node of the right chain $C_p$ in $T$ for some $p\in\RA(T)$.
Let $\rft(v)$ be the first node under {\em in-order} in $T_r(p)$ that is greater than $v$ (if it exists) in $T$.
The action $\theta_v(T)$ is defined  according to the following three cases:
\begin{itemize}
\item[(a)] if $\rft(v)$ lies on the right arm of $T$, then $\theta_v(T)$ is obtained by cutting $v$ off and inserting it into the right arm of $T$ so that the parent node of $\rft(v)$ is $v$;
\item[(b)] if $\rft(v)$ lies on the right chain $C_w$ of $T$ for some $w$,
then $\rft(v)$ is the leading element of $C_w$ and $\theta_v(T)$ is obtained by inserting $v$ into $C_w$ such that the parent node of $\rft(v)$ is $v$;
\item[(c)] if $\rft(v)$ does not exist,  then $\theta_v(T)$ is obtained by cutting  $v$ and attaching it as right child of the last node of the right arm of $T$.
\end{itemize} 
For any $321$-avoiding tree $T$, we can define 
$$\Theta(T)=\theta_{v_m}\circ\theta_{v_{m-1}}\circ\cdots\circ\theta_{v_1}(T),$$
assuming $\RL(T)=\{v_1<v_2<\cdots<v_m\}$. 
See Fig.~\ref{fig action} for an example of $\Theta(T)$.

For a $321$-avoiding tree $T\in \LB_n$ and a node  $v\in\RA(T)$, let 
$$
\mathsf{g}_v(T):=|\{b>v: b\notin T_r(v)\}|.
$$
For example,  if $T$ is the tree in Fig.~\ref{fig:321-tree}, then $\mathsf{g}_1(T)=|\{2,4,5\}|=3$ and $\mathsf{g}_3(T)=|\{4,5,7,8,9\}|=5$. Indeed, if $\pi=\lambda^{-1}(T)$ and $v=\pi_j\in\Rlmi(\pi)$, then $\mathsf{g}_v(T)=\lw_j(\pi)$. Introduce the parameter of $T$ as 
$$
\mathsf{g}(T):= \max_{v\in{\RA(T)}}\{\mathsf{g}_v(T)\}.
$$
Thus, 
\begin{equation}\label{g:mlw}
\mathsf{g}(T)=\mlw(\lambda^{-1}(T)).
\end{equation}
The following observation is an immediate consequence of the construction of $\Theta(T)$.

\begin{figure}
\centering
\begin{tikzpicture}[scale=0.17]
\draw[-](0,20) to (30,2);
\draw[-](0,20) to (-10,15);
\draw[-](5,17) to (1,14);
\draw[-](20,8) to (15,5);
\draw[-](25,5) to (20,2);
\draw[-](23,-2) to (20,2);
\draw[-](-10,15) to (-4,5);
\draw[-](1,14) to (7,4);

\node at  (0,20){$\bullet$};\node at  (1,21){$1$};
\node at  (5,17){$\bullet$};\node at  (6,18){$3$};
\node at  (10,14){$\bullet$};\node at  (11,15){$6$};
\node at  (15,11){$\bullet$};\node at  (16,12){$10$};
\node at  (20,8){$\bullet$};\node at  (21,9){$11$};
\node at  (25,5){$\bullet$};\node at  (26,6){$13$};
\node at  (30,2){$\bullet$};\node at  (31,3){$15$};
\node at  (-10,15){$\bullet$};\node at  (-11,16){$2$};
\node at  (-7,10){$\bullet$};\node at  (-8,9){$4$};
\node at  (-4,5){\red{$\bullet$}};\node at  (-5,4){$5$};
\node at  (1,14){$\bullet$};\node at  (0,15){$7$};
\node at  (4,9){$\bullet$};\node at  (3,8){$8$};
\node at  (7,4){\red{$\bullet$}};\node at  (6,3){$9$};
\node at  (15,5){\red{$\bullet$}};\node at  (14,4){$12$};
\node at  (23,-2){\red{$\bullet$}};\node at  (24,-3){$16$};
\node at  (20,2){$\bullet$};\node at  (19,1){$14$};

\draw[->,dashed,blue,thick] (-3.5,5.5) to (0.5,13.5);
\draw[->,dashed,blue,thick] (7.5,4.5) to (12,12);
\draw[->,dashed,blue,thick] (15.5,5) to (21.5,6);
\draw[->,dashed,blue,thick] (23.5,-1.5) to (31,0);

\node at  (10,-4){$\downarrow$};\node at  (11.5,-4){$\Theta$};
\draw[-](-3,-8) to (33,-26);
\draw[-](-3,-8) to (-11,-12);
\draw[-](-8,-15) to (-11,-12);
\draw[-](1,-10) to (-2,-13);
\draw[-](4,-19) to (-2,-13);
\draw[-](25,-22) to (22,-24);

\node at  (-3,-8){$\bullet$};\node at  (-2,-7){$1$};
\node at  (1,-10){$\bullet$};\node at  (2,-9){$3$};
\node at  (5,-12){$\bullet$};\node at  (6,-11){$6$};
\node at  (9,-14){\red{$\bullet$}};\node at  (10,-13){$9$};
\node at  (13,-16){$\bullet$};\node at  (14,-15){$10$};
\node at  (17,-18){$\bullet$};\node at  (18,-17){$11$};
\node at  (21,-20){\red{$\bullet$}};\node at  (22,-19){$12$};
\node at  (25,-22){$\bullet$};\node at  (26,-21){$13$};
\node at  (29,-24){$\bullet$};\node at  (30,-23){$15$};
\node at  (33,-26){\red{$\bullet$}};\node at  (34,-25){$16$};
\node at  (22,-24){$\bullet$};\node at  (21,-25.5){$14$};
\node at  (-11,-12){$\bullet$};\node at  (-12,-13){$2$};
\node at  (-8,-15){$\bullet$};\node at  (-9,-16){$4$};
\node at  (-2,-13){\red{$\bullet$}};\node at  (-3,-14){$5$};
\node at  (1,-16){$\bullet$};\node at  (0,-17){$7$};
\node at  (4,-19){$\bullet$};\node at  (3,-20){$8$};
\end{tikzpicture}
\caption{An example of $\Theta(T)$, where $T\in\LB_{16}$.\label{fig action}}
\end{figure}
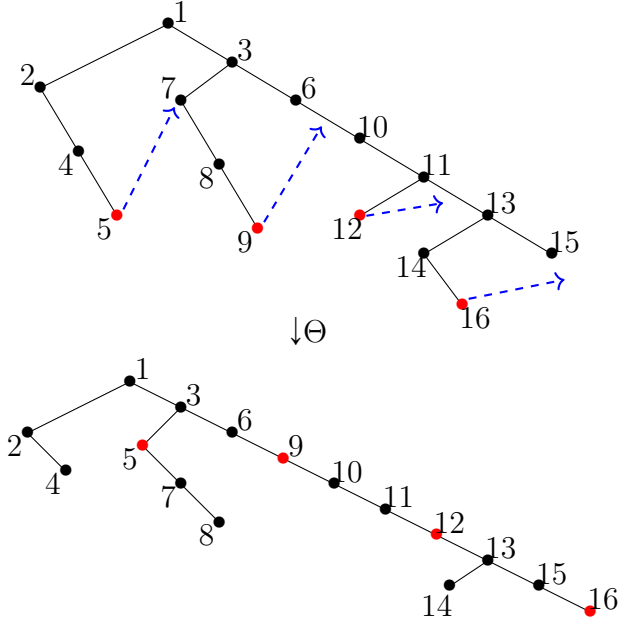

\begin{observation}\label{prop:cha}
For any $T\in\LB_n$ with $\mathsf{g}(T)>0$, we have  $\Theta(T)\in \LB_n$  and 
 $$
 \mathsf{g}(\Theta(T))=\mathsf{g}(T)-1.
 $$
\end{observation}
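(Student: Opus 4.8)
The plan is to analyze the operation $\Theta$ on a $321$-avoiding tree $T\in\LB_n$ directly from the definitions of $\mathsf{g}_v$ and the moves $\theta_v$, exploiting the correspondence $\mathsf{g}(T)=\mlw(\lambda^{-1}(T))$ together with the canonical decomposition recorded in Observation~\ref{lem:bijpro}. First I would establish that $\Theta(T)\in\LB_n$, i.e.\ that after relocating all the right leaves $v_1<v_2<\cdots<v_m$ the resulting tree is still a $321$-avoiding tree. The key point here is that each move $\theta_{v}$ takes a right leaf $v$, which sits at the end of some right chain $C_p$, and reinserts it as the parent of $\rft(v)$ --- either onto the right arm (cases (a),(c)) or as the new leading node of another right chain $C_w$ (case (b)); in all cases the number of left edges on the root-to-$v$ path stays equal to one after the move, and the resulting configuration still consists of right chains hanging off the right arm. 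For condition~(2) (the chains being linearly ordered consistently with the labels of their attaching nodes), I would argue that performing the moves in increasing order of $v$ guarantees that when $v$ is moved, every label already placed to the ``right'' of its destination is larger, so no order violation is created; this is precisely the content of the dynamic description of stack-sorting in Proposition~\ref{def:dyn} transported through $\lambda$.

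Second, and this is the heart of the statement, I would show $\mathsf{g}(\Theta(T))=\mathsf{g}(T)-1$. The cleanest route is to pass to permutations: set $\pi=\lambda^{-1}(T)\in\S_n(321)$, so that by~\eqref{g:mlw} we have $\mathsf{g}(T)=\mlw(\pi)=\max_{\pi_j\in\Rlmi(\pi)}(j-\pi_j)$. The moves $\theta_{v}$ are, under $\lambda$, exactly the relocations $\s_{\pi_i}$ of descent tops from the dynamic description, so by Proposition~\ref{def:dyn}, $\lambda^{-1}\circ\Theta\circ\lambda(\pi)=\s(\pi)$, i.e.\ $\Theta$ implements one round of stack-sorting on the tree side. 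It then suffices to prove that for $\pi\in\S_n(321)$ with $\mlw(\pi)>0$ one has $\s(\pi)\in\S_n(321)$ and $\mlw(\s(\pi))=\mlw(\pi)-1$. For the first part I would use the canonical decomposition~\eqref{eq:cannon}: stack-sorting a $321$-avoiding permutation merges the right-to-left minima into the trailing increasing run while leaving the non-minima increasing, so the output is again $321$-avoiding (alternatively cite that $\S_n^1=\S_n(231)\supseteq$ no, rather argue directly). For the decrement, observe that each right-to-left minimum $a_r$ of $\pi$ sitting at position $j_r$ contributes $\lw_{j_r}(\pi)=j_r-a_r$; one round of $\s$ moves every descent top exactly one ``level'' closer to its final sorted position, which decreases each $j_r-a_r$ by exactly one for those right-to-left minima that are not yet fixed points, and leaves the maximum decreasing by one. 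Concretely, if $a_r$ is the right-to-left minimum achieving the maximum $\lw$, its position drops by the number of larger elements that get pulled past it --- which is exactly one per round because of the $321$-avoidance constraint forcing the excedance values to be increasing.

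I would make the decrement argument fully rigorous by tracking a single right-to-left minimum. For $v\in\RA(T)$, $\mathsf{g}_v(T)=|\{b>v:b\notin T_r(v)\}|$ counts the labels larger than $v$ that lie in the left arm or in right chains whose attaching node is below $v$ on the right arm --- equivalently, the labels larger than $v$ that appear before $v$ in $\lambda^{-1}(T)$. After $\Theta$, precisely one of these larger labels (the smallest right leaf label exceeding $v$ that was attached below $v$, namely the relevant $\rft$-target's predecessor) gets moved onto the right arm at or above $v$'s position, hence leaves the counted set; and no new larger label enters the set for $v$ because the elements moved are only right leaves and they move rightward/upward. This gives $\mathsf{g}_v(\Theta(T))=\mathsf{g}_v(T)-1$ whenever $\mathsf{g}_v(T)>0$, and $\mathsf{g}_v(\Theta(T))=0$ when $\mathsf{g}_v(T)=0$; taking the maximum over $v\in\RA(T)=\RA(\Theta(T))$ yields $\mathsf{g}(\Theta(T))=\mathsf{g}(T)-1$. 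I expect the main obstacle to be the bookkeeping in case~(b), where a right leaf is reinserted into the \emph{interior} of the chain structure rather than onto the right arm: one must check both that this preserves the $321$-avoiding-tree conditions and that it still removes exactly one element from the counted set $\{b>v:b\notin T_r(v)\}$ for every $v$ on the right arm simultaneously --- in particular that the commuting order of the $\theta_{v_i}$'s (inherited from the commutation in Proposition~\ref{def:dyn}) is what makes the ``exactly one per $v$'' accounting work without double-counting.
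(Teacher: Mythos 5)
Your overall strategy coincides with the paper's: for each node $v$ on the right arm, track the set $S_v:=\{b>v:\, b\notin T_r(v)\}$ and show that $\Theta$ removes exactly one element from it and adds none, so that every positive $\mathsf{g}_v$ drops by one and the maximum drops by one. (The paper's entire proof is the one-sentence assertion that the element removed is the \emph{largest} node of $S_v$.) This is precisely where your write-up goes wrong: you identify the departing element as ``the smallest right leaf label exceeding $v$ that was attached below $v$'' and assert that it ``gets moved onto the right arm at or above $v$'s position.'' Both claims fail. Take $T=\lambda(\pi)$ for $\pi=2\,4\,5\,1\,7\,8\,9\,3\,6\,10\,12\,11\,14\,16\,13\,15$ (the tree of Fig.~5) and $v=3$: then $S_3=\{4,5,7,8,9\}$, the right leaves exceeding $3$ in chains attached at or below $3$ are $5$ and $9$, and the smallest of them, $5$, does \emph{not} leave $S_3$ --- after one round it still precedes $3$, having been re-inserted into the chain $C_3$ rather than onto the right arm (case (b) of $\theta_v$). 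The element that leaves $S_3$ is $9$, the largest element of $S_3$. The same misidentification undermines your permutation-side variant, where ``exactly one larger element gets pulled past $a_r$ per round'' is asserted but justified only by an appeal to increasing excedances; that ``exactly one'' claim is the entire content of the observation and is the step that must be proved.

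The repair fits your framework: $S_v$ consists of the labels $>v$ lying in the chains $C_u$ with $u\le v$, so its largest element $b^*$ is the last node of $C_{u^*}$, where $u^*$ is the largest such $u$ with $C_u\neq\emptyset$; in particular $b^*$ is a right leaf. By maximality of $u^*$ every node between $b^*$ and $v$ in in-order is a right-arm node smaller than $v<b^*$, so $\rft(b^*)$ lies strictly beyond $v$ (or fails to exist, case (c)), and $\theta_{b^*}$ carries $b^*$ past $v$, out of $S_v$. Any other right leaf $b\in S_v$ ends a chain $C_u$ with $u<u^*$; since every label of $C_{u^*}$ exceeds $b$, its target $\rft(b)$ occurs no later than the leading node of $C_{u^*}$, hence still before $v$, and $b$ stays in $S_v$. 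Nothing enters $S_v$ because every relocated node moves to a strictly later in-order position, and a node newly inserted into the right arm has all smaller labels to its left, so it contributes $\mathsf{g}$ equal to $0$ and cannot raise the maximum. Your remaining points --- that $\Theta(T)\in\LB_n$ must be checked (the paper's proof is silent on this) and that the order of the commuting moves $\theta_{v_1},\dots,\theta_{v_m}$ must not disturb the accounting --- are legitimate and can be handled along these lines, but as written the proposal's central counting step rests on the wrong element and would not go through.
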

\begin{proof}
 The difference between $\mathsf{g}_v(T)$ and $\mathsf{g}_v(\Theta(T))$ is caused by the removal of the largest node in $\{b>v: b\notin T_r(v)\}$.
\end{proof}

\begin{lemma}\label{lem sort}
For any $\pi\in \S_n(321)$, we have $\s(\pi)=\lambda^{-1}\circ\Theta\circ\lambda(\pi)$.
\end{lemma}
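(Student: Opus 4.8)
The plan is to prove Lemma~\ref{lem sort} by showing that the dynamic description of stack-sorting (Proposition~\ref{def:dyn}, i.e.\ $\s=\s'$) is intertwined with the tree action $\Theta$ via the bijection $\lambda$, proceeding relocation by relocation. Concretely, I would fix $\pi\in\S_n(321)$ with $T=\lambda(\pi)\in\LB_n$ (Observation~\ref{lem:bijpro}) and recall that by Observation~\ref{lem:bijpro} the descent tops of $\pi$ correspond exactly to the right leaves of $T$, so that $\DES(\pi)=\{b_1<\cdots<b_k\}$ matches $\RL(T)=\{v_1<\cdots<v_m\}$ with $b_i=v_i$, $k=m$. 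Thus $\s'(\pi)=\s_{b_k}\circ\cdots\circ\s_{b_1}(\pi)$ and $\Theta(T)=\theta_{v_m}\circ\cdots\circ\theta_{v_1}(T)$ are composites of the same length, processed in the same order (smallest label first), and it suffices to match them one step at a time.

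The core claim to establish is an invariant maintained throughout the process: after processing the first $i$ descent tops, the current permutation $\s_{b_i}\circ\cdots\circ\s_{b_1}(\pi)$ and the current tree $\theta_{v_i}\circ\cdots\circ\theta_{v_1}(T)$ are related by $\lambda$, where one reads the tree with the in-order (projection) convention so that $\lambda^{-1}$ applies. To make the single-step comparison work I would track precisely \emph{where} a descent top $\pi_i=v$ lands under $\s_v$: it is placed immediately before $\rfp(\pi_i)$, the first entry to its right that exceeds it, or at the end if none exists. On the tree side, $v$ is the last node of some right chain $C_p$ with $p\in\RA(T)$, and $\rft(v)$ is defined as the first node in in-order in the right subtree $T_r(p)$ exceeding $v$. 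The key observation is that under the in-order reading of $T$, the entry $\rfp(\pi_i)$ in the \emph{current} permutation is exactly the label $\rft(v)$ in the \emph{current} tree (when it exists), and ``at the end of the output'' corresponds to ``as right child of the last node of the right arm.'' One must then check that the three cases (a), (b), (c) in the definition of $\theta_v$ are precisely the three positional scenarios for $\rft(v)$ — on the right arm, leading a right chain $C_w$, or nonexistent — and that in each case relocating $v$ before $\rfp(\pi_i)$ in the permutation is mirrored by the described surgery on the tree (cut $v$, reinsert so that the parent of $\rft(v)$ becomes $v$, respectively attach as rightmost node of the right arm). Here one uses that after the surgery the increasing-tree property and the $321$-avoiding-tree structure are preserved — the node $v$ either joins the right arm or joins the chain $C_w$, consistent with Observation~\ref{prop:cha} which already guarantees $\Theta(T)\in\LB_n$.

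A subtlety that needs care is the \emph{order-independence and stability} of the individual relocations. Since we process descent tops in increasing order of label, I would argue (as in the proof of Proposition~\ref{def:dyn}) that a relocation $\s_{b_i}$ does not disturb the relative order of entries relevant to later relocations $\s_{b_j}$ with $j>i$, and symmetrically that on the tree side the node moved by $\theta_{v_i}$ does not interfere with the right-chain/right-arm data read by $\theta_{v_j}$; this is what lets the inductive invariant propagate. I expect the main obstacle to be bookkeeping the dynamic state: after several relocations the permutation is no longer $321$-avoiding (as the remark before Lemma~\ref{lemma sort} warns in the $213$-case, and similarly here intermediate permutations need not be $321$-avoiding), so one cannot invoke the canonical decomposition at intermediate stages — instead the matching ``$\rfp$ in the word $=$ $\rft$ in the tree'' must be verified directly from the in-order reading and the position of $v$ at that moment. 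Once the single-step correspondence and the invariant are in place, composing all $m=k$ steps yields $\s'(\pi)=\lambda^{-1}\circ\Theta\circ\lambda(\pi)$, and Proposition~\ref{def:dyn} gives $\s(\pi)=\s'(\pi)$, completing the proof.
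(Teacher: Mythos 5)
Your proposal is correct and follows essentially the same route as the paper's proof: both invoke Proposition~\ref{def:dyn}, identify $\DES(\pi)$ with $\RL(\lambda(\pi))$ via Observation~\ref{lem:bijpro}, and match the relocations $\s_{b_i}$ with the tree surgeries $\theta_{b_i}$ one step at a time using the identification $\rfp(b_i)=\rft(b_i)$. Your write-up in fact makes explicit several verification points (the inductive invariant read off via in-order, and the stability of later relocations) that the paper's proof compresses into ``it is straightforward to verify.''
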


\begin{proof}
According to Proposition~\ref{def:dyn}, we can use the dynamic description  of the stack-sorting operator  $\s$ to show that $\s(\pi)=\lambda^{-1}\circ\Theta\circ\lambda(\pi)$.
By Observation~\ref{lem:bijpro}, if  $\DES(\pi)=\{b_1<b_2<\cdots<b_k\}$, then $\RL(T)=\{b_1<b_2<\cdots<b_k\}$ with $T=\lambda(\pi)$. By Proposition~\ref{def:dyn}, we have 
$$
\s(\pi)=\s_{b_k}\circ\s_{b_{k-1}}\circ\cdots\circ\s_{b_1}(\pi).
$$
On the other hand, 
$$\Theta(T)=\theta_{b_k}\circ\theta_{b_{k-1}}\circ\cdots\circ\theta_{b_1}(T).$$
Since $\rfp(b_i)=\rft(b_i)$ (if it exists) for each $i$, it is straightforward to verify that $\s_{b_1}(\pi)=\lambda^{-1}(\theta_{b_1}(T))$, $\s_{b_2}\circ\s_{b_1}(\pi)=\lambda^{-1}(\theta_{b_2}\circ\theta_{b_1}(T))$, and then so on, and finally  $\s_{b_k}\circ\s_{b_{k-1}}\circ\cdots\circ\s_{b_1}(\pi)=\lambda^{-1}(\theta_{b_k}\circ\theta_{b_{k-1}}\circ\cdots\circ\theta_{b_1}(T))$, as desired. 
\end{proof}

We can now prove Lemma~\ref{lem:321}. 

\begin{proof}[{\bf Proof of Lemma~\ref{lem:321}}]
This follows directly from  relationship~\eqref{g:mlw}, Observation~\ref{prop:cha} and Lemma~\ref{lem sort}. 
\end{proof}

\section{Further applications}
\label{sec:4}
As applications of Lemmas~\ref{lem:213} and~\ref{lem:321}, we can characterize  $213$- or $321$-avoiding $t$-stack-sortable permutations using the pattern $23\cdots(t+2)1$. 

\begin{proposition}\label{prop lrrp}
      For any positive integer $t$ and permutation $\pi\in\S_n(213)$, the following are equivalent:
   \begin{enumerate}
       \item $\pi$ avoids the pattern $23\cdots(t+2)1$;
       \item $\lrrp(\lambda(\pi))\leq t$.
   \end{enumerate}
   Consequently, 
   \begin{equation}\label{pat:t213}
   \S_n^t(213)=\S_n(231,23\cdots(t+2)1). 
  \end{equation}
\end{proposition}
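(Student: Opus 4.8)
\textbf{Proof proposal for Proposition~\ref{prop lrrp}.}

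The plan is to prove the equivalence $(1)\Leftrightarrow(2)$ by translating the pattern $23\cdots(t+2)1$ into the language of the $213$-avoiding trees, and then to deduce~\eqref{pat:t213} by combining this equivalence with Lemma~\ref{lem:213}. First I would recall that under the natural right-pre-order labeling, $\lambda$ is a bijection between $\S_n(213)$ and $\B_n$, so a permutation $\pi\in\S_n(213)$ is determined by its tree $T=\lambda(\pi)$. The crucial observation is the following dictionary: an occurrence of $23\cdots(t+2)1$ in $\pi$ is a sequence of positions carrying values $v_1<v_2<\cdots<v_{t+1}$ (the ``$23\cdots(t+2)$'' part, read left to right in increasing order) followed later by a smaller value $v_0<v_1$ (the ``$1$'' part). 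Reading the tree via $\lambda^{-1}$ in in-order, the fact that $v_0$ appears \emph{after} all of $v_1,\dots,v_{t+1}$ in $\pi$ but is \emph{smaller} than all of them forces $v_0$ to be a right-to-left minimum of the subword, and geometrically this means $v_1,\dots,v_{t+1}$ all lie strictly to the left of $v_0$ in in-order. I would then verify that an increasing chain of values $v_1<\cdots<v_{t+1}$ all appearing to the left of a common later-and-smaller element, inside a $213$-avoiding tree, corresponds exactly to a restricted right path (a path off the right arm using right edges) of length $t$: the $213$-avoidance prevents any ``left turn'' configuration that would create a $213$ among these points, so the only way to stack $t+1$ increasing values into an in-order-consecutive block that is then cut off by $v_0$ is along a chain of right edges not meeting the right arm.

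Concretely, I would argue both directions. For $(2)\Rightarrow(1)$ (contrapositive): if $\pi$ contains $23\cdots(t+2)1$, take such an occurrence with values $v_1<\cdots<v_{t+1}$ and $v_0$; because $v_0$ is smaller than and to the right of all the $v_i$, in the tree $T$ all the $v_i$ lie in the left subtree of some common ancestor that in-order-precedes $v_0$, and since $\pi$ avoids $213$ the nodes $v_1,\dots,v_{t+1}$ must form an increasing path of right edges not touching the right arm (any branching or any node on the right arm among them would either contradict the ordering of values or use a node that in-order-precedes everything to its right, hence cannot be followed by the smaller $v_0$). This produces a restricted right path with $t$ right edges, so $\lrrp(T)\ge t+1>t$. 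For $(1)\Rightarrow(2)$ (also contrapositive): if $\lrrp(T)\ge t+1$, there is a restricted right path $w_0-w_1-\cdots-w_t$ using $t$ right edges, none of whose nodes lies on the right arm; then $w_0$ is a left child of some node $p$ on the (path toward the) right arm, and the node $p$ together with the increasing in-order reading of $w_0<w_1<\cdots<w_t$ — all of which in-order-precede $p$ — realizes the pattern $23\cdots(t+2)1$ in $\pi$ after checking that $p$ is indeed read after all the $w_i$ and is smaller than $w_0$ (it is an ancestor on the right side, so its label is smaller, and in-order it comes after the entire left subtree containing the $w_i$). Here the main technical care is choosing the correct ``$1$''-node: it should be the node on the right arm whose left subtree contains the restricted right path, and one must check it is genuinely smaller than $w_0$, which holds because along any root-to-node path labels increase, and $p$ is a strict ancestor of $w_0$.

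Having established $(1)\Leftrightarrow(2)$, the displayed equality~\eqref{pat:t213} is immediate: by Lemma~\ref{lem:213}, for $\pi\in\S_n(213)$ we have $\pi\in\S_n^t(213)$ iff $\lrrp(\lambda(\pi))\le t$, which by the equivalence just proved is iff $\pi$ avoids $23\cdots(t+2)1$; intersecting with $\S_n(213)=\S_n(213)$ — and noting that $213$ together with $23\cdots(t+2)1$ is, after the trivial inverse/complement bookkeeping used throughout the paper, the pair $\{231,23\cdots(t+2)1\}$ in the stated normalization — gives $\S_n^t(213)=\S_n(231,23\cdots(t+2)1)$.

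\emph{Main obstacle.} The delicate point is the precise correspondence between increasing subsequences terminated by a smaller later element on the permutation side and restricted right paths on the tree side; in particular one must use $213$-avoidance decisively to rule out the possibility that the $t+1$ increasing values are spread across several branches or involve the right arm, and to ensure that the path realizing $\lrrp$ can always be ``read off'' as the middle of the pattern. I expect the bulk of the work to be a careful case analysis of where a putative occurrence of $23\cdots(t+2)1$ can sit relative to the right arm and the right chains of a $213$-avoiding tree, mirroring the structural analysis already used for Lemma~\ref{lem:213}.
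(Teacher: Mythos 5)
Your overall strategy---translating the pattern $23\cdots(t+2)1$ into a statement about restricted right paths of $\lambda(\pi)$ and then invoking Lemma~\ref{lem:213}---is exactly the paper's route, and both of your directions ultimately select the same witnesses (the increasing part of the pattern sits on a restricted path, and the ``$1$'' is the lowest right-arm ancestor whose left subtree contains that path). There is, however, one concrete over-claim that needs repair: you assume in both directions that the relevant restricted right path is a pure chain of right edges $w_0-w_1-\cdots-w_t$, and that $213$-avoidance forces consecutive elements of the increasing part of the pattern to be joined by single right edges. Neither is true. A longest restricted right path may contain left edges (in the paper's Fig.~\ref{fig lambda} the unique longest restricted right path is $7$--$8$--$9$--$10$--$11$--$12$, in which $8\to 9$ and $10\to 11$ are left edges), and correspondingly $213$-avoidance only forces $\pi_{i_{j+1}}$ to lie \emph{somewhere} in the right subtree of $\pi_{i_j}$, so the connecting path contains at least one right edge but possibly also left edges. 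In the direction ``pattern $\Rightarrow\lrrp>t$'' this is harmless (you still obtain a restricted path with at least $t$ right edges). But in the direction ``$\lrrp>t\Rightarrow$ pattern'' your recipe of reading off all nodes of the path breaks down when left edges are present, because a node reached by a left edge in-order-precedes its parent, so the path's nodes are not an increasing subsequence of $\pi$. The fix is the paper's: take as the increasing part the $t$ parents $u_1<\cdots<u_t$ of the right edges of the path together with its maximal node $w$; these do occur left to right and increasing in $\pi$, and together with the lowest right-arm ancestor $v$ they form the pattern.

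Separately, your final step does not deliver the displayed identity as written. Your argument (correctly) yields $\S_n^t(213)=\S_n(213,23\cdots(t+2)1)$; the appeal to ``inverse/complement bookkeeping'' to replace $213$ by $231$ is not valid as a set identity (reverse-complement does not map $\S_n(213)$ to itself), and indeed $\S_n^t(213)\neq\S_n(231,23\cdots(t+2)1)$ as sets already for $n=3$ and $t\geq 2$, where the left side is $\S_3(213)$ and the right side is $\S_3(231)$. The ``$231$'' in the displayed equation appears to be a misprint for ``$213$'': the paper's own proof concludes exactly by combining the equivalence with Lemma~\ref{lem:213}, which gives the $213$ version. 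You should simply state that conclusion rather than manufacture a justification for the misprinted one.
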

 
\begin{proof}
    Let $\pi\in \S_n(213)$ with $\lrrp(\lambda(\pi))> t$. Assume that  $\lambda(\pi)$ is equipped with natural labelings (i.e., nodes labeled according to right pre-order).  There exists a  restricted  path $P$ in $\lambda(\pi)$ containing exactly  $t$ right edges. Suppose that $u_1<u_2<\cdots< u_t$ are the parent nodes of these $t$ right edges  and $w$ is the maximal node in $P$.
    Since $P$ is node-disjoint from the right arm, there exists a node $v$ on the right arm such that $P$ lies entirely in the left subtree of $v$.
The subsequence $u_1,\ldots, u_t, w, v$ form a pattern $23\cdots(t+2)1$ in $\pi$. 

    Conversely, suppose that $\pi \in \S_n(213)$ contains a pattern $23\cdots(t+2)1$. Let $\pi_{i_1}\pi_{i_2}\cdots\pi_{i_{t+2}}$ be a subsequence forming this pattern. 
  For each $1 \leq j \leq t$, since $\pi$ avoids $213$, all entries between $\pi_{i_j}$ and $\pi_{i_{j+1}}$ in $\pi$ must be greater than $\pi_{i_j}$. 
    By the definition of $\lambda$, $\pi_{i_{j+1}}$ must appear in the right subtree of $\pi_{i_j}$ in $\lambda(\pi)$ and the path from $\pi_{i_j}$ to $\pi_{i_{j+1}}$ contains at least one right edge. Therefore, the path from $\pi_{i_1}$ to $\pi_{i_{t+1}}$ contains at least $t$ right edges. 
    Moreover, since $\pi_{i_{t+2}}$ appears to the right of $\pi_{i_j}$ in $\pi$ and $\pi_{i_{t+2}} < \pi_{i_j}$, $\pi_{i_j}$ does not lie on the right arm of the tree. Thus, this path is node-disjoint from the right arm, proving that $\lrrp(\lambda(\pi)) > t$. This proves the first statement. 
    
    The second statement then follows by combining the first statement with Lemma~\ref{lem:213}. 
\end{proof}

\begin{proposition}\label{lem pattern}
For any positive integer $t$ and permutation $\pi \in \S_n(321)$, the following  are equivalent:
\begin{enumerate}
\item $\pi$ avoids the pattern $23\cdots(t+2)1$;
\item $\mlw(\pi)\leq t$.
\end{enumerate}
Consequently, 
   \begin{equation}\label{pat:t321}
   \S_n^t(321)=\S_n(321,23\cdots(t+2)1). 
  \end{equation}
\end{proposition}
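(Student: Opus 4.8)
The plan is to mirror the structure of the proof of Proposition~\ref{prop lrrp}, but working directly on the permutation side via the canonical decomposition~\eqref{eq:cannon} instead of the tree side; the relationship~\eqref{g:mlw} together with Lemma~\ref{lem:321} will then reduce the statement to the equivalence of (1) and (2). Throughout, write $\pi=I_1a_1I_2a_2\cdots I_la_l$ for the canonical decomposition, so $\Rlmi(\pi)=\{a_1<a_2<\cdots<a_l\}$ with $a_j=\pi_{i_j}$, and recall that the non-right-to-left-minima $I_1I_2\cdots I_l$ form an increasing subsequence. The key quantity is $\mlw(\pi)=\max_j\lw_{i_j}(\pi)=\max_j(i_j-a_j)$, and the combinatorial content of $i_j-a_j$ is exactly the number of letters larger than $a_j$ that occur to its left (since $\pi$ is $321$-avoiding, every inversion with right endpoint $a_j$ has this form, as already observed around~\eqref{eq:rcinv}).

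First I would prove (2)$\Rightarrow$(1) by contraposition: suppose $\pi$ contains a pattern $23\cdots(t+2)1$, realized by a subsequence $\pi_{p_1}\pi_{p_2}\cdots\pi_{p_{t+1}}\pi_{p_{t+2}}$ with $p_1<\cdots<p_{t+2}$ and $\pi_{p_1}<\pi_{p_2}<\cdots<\pi_{p_{t+1}}$ all exceeding $\pi_{p_{t+2}}$. Since $\pi_{p_{t+2}}$ is smaller than everything to its left among $\pi_{p_1},\dots,\pi_{p_{t+1}}$, none of $\pi_{p_1},\dots,\pi_{p_{t+1}}$ can be a right-to-left minimum of $\pi$; hence $\pi_{p_{t+2}}$ lies in the tail part, i.e.\ among $a_1,\dots,a_l$, say $\pi_{p_{t+2}}=a_k=\pi_{i_k}$ with $i_k\ge p_{t+2}$. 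Now the $t+1$ letters $\pi_{p_1}<\cdots<\pi_{p_{t+1}}$ are all larger than $a_k$ and all occur to the left of position $i_k$ (because $p_1<\cdots<p_{t+1}<p_{t+2}\le i_k$), so they all contribute to the inversions with right endpoint $a_k$, giving $i_k-a_k\ge t+1$ and therefore $\mlw(\pi)\ge t+1>t$.

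Next I would prove (1)$\Rightarrow$(2), again by contraposition: suppose $\mlw(\pi)>t$, so there is some $a_k=\pi_{i_k}$ with $i_k-a_k\ge t+1$. Then there are at least $t+1$ letters larger than $a_k$ lying to the left of position $i_k$; since $\pi$ avoids $321$, these letters cannot contain a descent among themselves, so they form an increasing subsequence $x_1<x_2<\cdots<x_m$ with $m\ge t+1$, occurring in this left-to-right order and all occurring before $a_k$. Taking $x_1<x_2<\cdots<x_{t+1}$ followed by $a_k$ yields a subsequence order-isomorphic to $23\cdots(t+2)1$, so $\pi$ contains that pattern. This establishes the equivalence of (1) and (2); the displayed identity~\eqref{pat:t321} then follows immediately: by Lemma~\ref{lem:321}, $\pi\in\S_n^t(321)$ iff $\pi\in\S_n(321)$ and $\mlw(\pi)\le t$, which by the equivalence just proved is iff $\pi$ avoids both $321$ and $23\cdots(t+2)1$, and since $\S_n^t(321)\subseteq\S_n^1(321)=\S_n(231)\cap\S_n(321)$ one checks the pattern set $\{321,23\cdots(t+2)1\}$ agrees with $\{231,321,23\cdots(t+2)1\}$ on $321$-avoiders — in fact avoidance of $321$ already subsumes nothing extra, so writing $\S_n(321,23\cdots(t+2)1)$ is exactly the set described.

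The main obstacle, such as it is, is the careful bookkeeping in the direction (2)$\Rightarrow$(1): one must make sure the small letter $\pi_{p_{t+2}}$ of the pattern can be taken to be a \emph{right-to-left minimum} $a_k$ and that all $t+1$ large letters genuinely sit to the left of $i_k$; the cleanest way is to choose, among all occurrences of the pattern, one whose small letter is rightmost, and then argue that this forces the small letter to be a right-to-left minimum (if some letter to its right were smaller, it could replace $\pi_{p_{t+2}}$, contradicting maximality of the position). Everything else is a direct consequence of the $321$-avoiding structure and the interpretation of $\lw_{i_k}(\pi)=i_k-a_k$ as a count of left-larger letters, so no deep new idea beyond what is already used for Proposition~\ref{prop lrrp} is needed.
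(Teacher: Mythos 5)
Your proof is correct and takes essentially the same approach as the paper: both directions argue directly on the permutation via the identification of $\lw_j(\pi)=j-\pi_j$ with the number of left-larger letters of a right-to-left minimum, and \eqref{pat:t321} then follows from Lemma~\ref{lem:321}. In fact you are slightly more careful than the paper in the direction (2)$\Rightarrow$(1): as written in your second paragraph the inference ``none of the large letters is a right-to-left minimum, hence $\pi_{p_{t+2}}$ is one'' is a non sequitur, but your closing paragraph correctly repairs it by choosing an occurrence whose small letter is rightmost (equivalently, one could replace $\pi_{p_{t+2}}$ by the right-to-left minimum of the suffix starting at $p_{t+2}$). One aside should be deleted: the claimed containment $\S_n^t(321)\subseteq\S_n^1(321)$ is false for $t>1$ (the inclusions go the other way, $\S_n^1\subseteq\S_n^2\subseteq\cdots$); fortunately that remark plays no role, since \eqref{pat:t321} already follows from Lemma~\ref{lem:321} and the equivalence of (1) and (2).
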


\begin{proof}
If $\mlw(\pi)>t$, then there exists $\pi_j\in\Rlmi(\pi)$ such that $\lw_j(\pi)=j-\pi_j>t$. Thus, there are at least $t+1$ letters to the left of $\pi_j$ and  greater than $\pi_j$. Since $\pi$ is $321$-avoiding, these $t+1$ letters are increasing from left to right, which together with $\pi_j$ form a $23\cdots(t+2)1$ pattern.  

Conversely, if $\pi$ contains the pattern $23\cdots(t+2)1$ with $\pi_j$ playing the role of $1$, then $\pi_j$ must be a left-to-right minimum  and $j-\pi_j\geq t+1$. This implies that $\mlw(\pi)>t$, which completes the proof of the first statement. 

The second statement follows by combining the  first statement with Lemma~\ref{lem:321}. 
\end{proof}

At the end of~\cite{Zhang},  Zhang and Kitaev posed another enumerative conjecture as follows. 

\begin{conjecture}[Zhang and Kitaev~\cite{Zhang}]\label{main conj2}
    For any $t\geq1$ and $p\in\{213,321\}$,  $t$-stack-sortable $p$-avoiding  permutations are in one-to-one correspondence with  $(132,12\cdots (t+2))$-avoiding permutations.
\end{conjecture}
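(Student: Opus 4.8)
The plan is to produce an explicit bijection, reusing machinery already in place. By Theorem~\ref{main:bij} the map $\Upsilon$ is a bijection of $\S_n^t(321)$ onto $\S_n^t(213)$, so it is enough to handle $p=321$: once a bijection $\S_n(132,12\cdots(t+2))\to\S_n^t(321)$ is in hand, post-composing with $\Upsilon$ settles the case $p=213$ simultaneously. To build the former I would factor it through Dyck paths, using on one side the bijection $\rho\colon\S_n^t(321)\to\D_{n,t}$ from the construction of $\Upsilon$ (which rests on Lemma~\ref{lem:321}), and on the other side a suitable classical bijection between $132$-avoiding permutations and Dyck paths.

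Concretely, I would recall the recursive bijection $\psi\colon\S_n(132)\to\D_n$ given by $\psi(\emptyset)=\emptyset$ and, writing $\pi=\pi_L\,n\,\pi_R$ with $n$ the largest entry, $\psi(\pi)=\mathsf{E}\,\psi(\pi_L)\,\mathsf{N}\,\psi(\pi_R)$, where $\mathsf{E}=(1,0)$ and $\mathsf{N}=(0,1)$ and where $\pi_L,\pi_R$ are order-normalized to genuine permutations. Here one uses the standard fact that, since $\pi$ avoids $132$, every entry of $\pi_L$ exceeds every entry of $\pi_R$; hence $\pi_L$ and $\pi_R$ are themselves $132$-avoiding, $\psi$ is well defined, and $\mathsf{E}\,\psi(\pi_L)\,\mathsf{N}\,\psi(\pi_R)$ is precisely the first-return decomposition of $\psi(\pi)$, which makes $\psi$ a bijection onto $\D_n$.

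The crucial claim will be that $\psi$ transports the statistic $\mathsf{lis}(\pi)$ --- the length of a longest increasing subsequence of $\pi$ --- to the height $\mathsf{ht}(P):=\max\{\,i-j : (i,j)\in P\,\}$ of the Dyck path. On the permutation side, the block structure above forces every increasing subsequence of $\pi$ to lie wholly inside $\pi_R$ or wholly inside $\pi_L$ with $n$ optionally appended, so $\mathsf{lis}(\pi)=\max(1+\mathsf{lis}(\pi_L),\mathsf{lis}(\pi_R))$; on the path side, the first-return decomposition gives $\mathsf{ht}(\psi(\pi))=\max(1+\mathsf{ht}(\psi(\pi_L)),\mathsf{ht}(\psi(\pi_R)))$. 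As these recursions and the base case agree, induction yields $\mathsf{lis}(\pi)=\mathsf{ht}(\psi(\pi))$. Consequently $\pi$ avoids $12\cdots(t+2)$ iff $\mathsf{lis}(\pi)\le t+1$ iff $\psi(\pi)$ stays in the strip $0\le i-j\le t+1$, that is, iff $\psi(\pi)\in\D_{n,t}$. Thus $\psi$ restricts to a bijection $\S_n(132,12\cdots(t+2))\to\D_{n,t}$, and $\rho^{-1}\circ\psi$ (respectively $\Upsilon\circ\rho^{-1}\circ\psi$) is the desired bijection onto $\S_n^t(321)$ (respectively $\S_n^t(213)$), proving Conjecture~\ref{main conj2}. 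Since the cardinality of $\S_n(132,12\cdots(t+2))$ was determined by Chow and West~\cite{CW} through the transfer-matrix method, the same composite also supplies a bijective proof of their count.

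I expect essentially no serious difficulty here: the single point that needs care is the identification of $\mathsf{lis}$ with Dyck height under $\psi$, and even that reduces to matching two one-line recursions once the recursive form of $\psi$ is chosen so that its first-return decomposition mirrors the ``$\pi_L\,n\,\pi_R$'' decomposition of $132$-avoiding permutations. In other words, the only real obstacle is picking the right classical bijection (and the right statistic to track on it); everything else is an assembly of maps that are already available, namely $\psi$, $\rho$, and $\Upsilon$.
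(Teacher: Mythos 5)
Your argument is correct, and it checks out in detail: the block decomposition $\pi=\pi_L\,n\,\pi_R$ of a $132$-avoiding permutation does force all entries of $\pi_L$ above all entries of $\pi_R$, so the recursion $\mathsf{lis}(\pi)=\max(1+\mathsf{lis}(\pi_L),\mathsf{lis}(\pi_R))$ holds and matches the first-return recursion for the height of $\psi(\pi)$; hence $\psi$ carries $\S_n(132,12\cdots(t+2))$ onto exactly the paths with $\max(i-j)\leq t+1$, which is $\D_{n,t}$, and $\rho^{-1}\circ\psi$ (resp.\ $\Upsilon\circ\rho^{-1}\circ\psi$) does the job. However, your route is genuinely different from the paper's. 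The paper never leaves the $213$-avoiding side: it first uses $\r\circ\c$ to replace $\S_n(132,12\cdots(t+2))$ by $\S_n(213,12\cdots(t+2))$, then characterizes both $\S_n^t(213)=\S_n(213,23\cdots(t+2)1)$ and $\S_n(213,12\cdots(t+2))$ as the $213$-avoiders whose binary trees satisfy $\lrrp(\lambda(\pi))\leq t$ and $\lrp(\lambda(\pi))\leq t$ respectively (Propositions~\ref{prop lrrp} and~\ref{prop lrp}), and finally observes that the tree involution $\beta\circ\gamma$ swaps these two statistics, giving the bijection $\lambda^{-1}\circ\beta\circ\gamma\circ\lambda$; the $321$ case then follows via $\Upsilon$. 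Your version instead anchors everything at Dyck paths via the classical correspondence between $\mathsf{lis}$ and path height, which lets you bypass the pattern characterizations $23\cdots(t+2)1$ / $12\cdots(t+2)$ and the reverse-complement step entirely, at the cost of importing one extra classical bijection $\psi$ not used elsewhere in the paper. The paper's tree-theoretic route has the advantage of staying inside the machinery already built for $\Upsilon$ and of making the equinumerosities of Chow--West refinable by the same tree statistics; your route is arguably shorter and more self-contained for the conjecture itself.
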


For any permutation $\pi=\pi_1\cdots\pi_n$, define two fundamental symmetry operations:
\begin{itemize}
    \item  $\r(\pi)=\pi_n\cdots\pi_1$, the reversal of $\pi$;
    \item   $\c(\pi)=(n+1-\pi_1)\cdots(n+1-\pi_n)$, the complement of $\pi$.
\end{itemize}
The composition $\r\circ\c$ sets up a one-to-one correspondence   between $\S_n(132,12\cdots (t+2))$ and $\S_n(213,12\cdots (t+2))$. Thus, in order to prove Conjecture~\ref{main conj2}, it suffices to construct a bijection between $\S_n(213,12\cdots (t+2))$ and $\S_n(213,23\cdots (t+2)1)$. Such a simple bijection will be constructed using binary trees. 

For a binary tree $T$, a {\em longest right path} of $T$ is a path containing maximum number of right edges. Denote by $\lrp(T)$ the number of right edges in any longest right path of $T$. We have the following characterization of $(213,12\cdots (t+2))$-avoiding permutations.

\begin{proposition}\label{prop lrp}
      For any $t\geq1$ and $\pi\in\S_n(213)$, the following are equivalent:
   \begin{enumerate}
       \item $\pi$ avoids the pattern $12\cdots (t+2)$;
       \item $\lrp(\lambda(\pi))\leq t$.
   \end{enumerate}
\end{proposition}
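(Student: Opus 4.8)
\textbf{Proof proposal for Proposition~\ref{prop lrp}.}

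The plan is to argue both directions by directly connecting an occurrence of the increasing pattern $12\cdots(t+2)$ in a $213$-avoiding $\pi$ to a path of $t+1$ right edges in $\lambda(\pi)$ (recall $\lambda$ restricts to a bijection $\S_n(213)\to\B_n$ with nodes labeled in right pre-order). The key structural fact I would use repeatedly is the same one invoked in the proof of Proposition~\ref{prop lrrp}: if $\pi$ avoids $213$ and $\pi_a<\pi_b$ with $a<b$, then every letter strictly between positions $a$ and $b$ exceeds $\pi_a$, so $\pi_b$ lies in the right subtree of $\pi_a$ in $\lambda(\pi)$, and the tree path from $\pi_a$ down to $\pi_b$ contains at least one right edge. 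This is the engine that turns an ascending chain into a chain of right edges.

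For $(1)\Rightarrow(2)$ by contraposition: assume $\lrp(\lambda(\pi))\geq t+1$, so there is a path $P$ with $t+1$ right edges. Let $x_0$ be the top node of $P$ and $x_1,\ldots,x_{t+1}$ the successive endpoints of its right edges (so $x_0$ is an ancestor of $x_1$, which is an ancestor of $x_2$, etc., each link containing a right edge). Since in right pre-order a node precedes everything in its right subtree when read by $\lambda^{-1}$ (in-order reading projects nodes downward), and since $x_j$ sits in the right subtree of $x_{j-1}$, reading off $\lambda^{-1}(\lambda(\pi))=\pi$ we get that $x_0$ appears before $x_1$ before $\cdots$ before $x_{t+1}$ in $\pi$; moreover along a right edge the child label exceeds the parent label, and more generally a node in the right subtree of $v$ has larger label than $v$ — wait, that is only true for \emph{increasing} binary trees, not for $\lambda$ on $213$-avoiders; so instead I would read values off directly. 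The cleaner route: by the structural fact above applied in reverse, I should build the chain \emph{from the permutation side}, so I will instead prove $(2)\Rightarrow(1)$ contrapositively and $(1)\Rightarrow(2)$ contrapositively, both using the ``between positions'' fact. Concretely, if $\pi$ contains $12\cdots(t+2)$ via $\pi_{i_1}<\pi_{i_2}<\cdots<\pi_{i_{t+2}}$ with $i_1<\cdots<i_{t+2}$, then for each $j$, $\pi_{i_{j+1}}$ is in the right subtree of $\pi_{i_j}$ and the path between them has a right edge; concatenating, the path from $\pi_{i_1}$ to $\pi_{i_{t+2}}$ has at least $t+1$ right edges, so $\lrp(\lambda(\pi))\geq t+1$. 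Conversely, given a path $P$ in $\lambda(\pi)$ with $t+1$ right edges, let $v_0,v_1,\ldots,v_{t+1}$ be $v_0=$ top of $P$ followed by the lower endpoints of the successive right edges along $P$; then in the in-order word $\pi$ each $v_{j-1}$ precedes $v_j$ (since $v_j$ lies in the right subtree of $v_{j-1}$) and $v_{j-1}<v_j$ in label — for a right edge $(u,\text{child }w)$ we have every node in that subtree reached through right edges is larger; more carefully, the node at the bottom of a maximal string of right edges from $u$ has label larger than $u$ because $\lambda$ sends the root to the minimum of each block — so the block rooted at $v_{j-1}$ has $v_{j-1}$ as its smallest label, hence $v_j>v_{j-1}$. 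Thus $v_0,v_1,\ldots,v_{t+1}$ is an increasing subsequence of length $t+2$ in $\pi$, giving an occurrence of $12\cdots(t+2)$.

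The main obstacle I anticipate is bookkeeping the correspondence between ``right edges on a path in $\lambda(\pi)$'' and ``ascents read off in the in-order word,'' specifically making precise that consecutive nodes along a right-edge path are comparable in the correct order in $\pi$. The safe way to handle this is to recall that $\lambda$ builds the tree by always rooting a block at its minimum and recursing on the left and right parts, so: (i) every node in the right subtree of $v$ is read \emph{after} $v$ in in-order, and (ii) if $v$ is the minimum of the block it roots, then its whole right subtree has labels exceeding $v$; since in $\lambda$ on a $213$-avoider each node is the minimum of the sub-block it heads — this needs the $213$-avoidance: a $213$-pattern is exactly what would put a smaller value to the right within a right subtree — one gets both properties simultaneously. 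I would isolate this as a short preliminary observation (mirroring how Proposition~\ref{prop lrrp} is set up) and then the two implications follow cleanly. No further enumerative input is needed for Proposition~\ref{prop lrp} itself; the consequence for Conjecture~\ref{main conj2} would be assembled afterward from this proposition together with $\r\circ\c$ and a tree map exchanging $\lrp$ with $\lrrp$.
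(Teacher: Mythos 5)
Your forward direction (an occurrence of $12\cdots(t+2)$ forces $t+1$ right edges on a single descending path) is correct and is exactly the argument the paper adapts from Proposition~\ref{prop lrrp}: $213$-avoidance puts $\pi_{i_{j+1}}$ in the right subtree of $\pi_{i_j}$, so each of the $t+1$ consecutive segments of the path contributes a distinct right edge.

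The converse direction as written has a genuine error in the choice of nodes. You take $v_0$ to be the top of the path and $v_1,\ldots,v_{t+1}$ to be the \emph{lower} endpoints of the successive right edges, and claim that each $v_j$ lies in the right subtree of $v_{j-1}$. This fails whenever the path contains a left edge between two of its right edges: then $v_j$ sits in the \emph{left} subtree of $v_{j-1}$ and is read \emph{before} $v_{j-1}$ in in-order, so $v_0v_1\cdots v_{t+1}$ need not be a subsequence of $\pi$ in the required position order (the labels do increase, but the positions do not). Concretely, take the tree with root $1$, right child $2$, whose left child is $3$, whose right child is $4$; its in-order word is $\pi=1\,3\,4\,2\in\S_4(213)$ and the path $1$--$2$--$3$--$4$ has two right edges, so $\lrp=2$. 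Your selection gives $v_0=1$, $v_1=2$, $v_2=4$, which occupy positions $1,4,3$ in $\pi$ --- not an occurrence of $123$. The repair is the one used in the proof of Proposition~\ref{prop lrrp}: take the \emph{upper} endpoints $u_1<\cdots<u_{t+1}$ of the $t+1$ right edges together with the bottom node $w$ of the path. Since the path leaves each $u_j$ through its right child, $u_{j+1}$ (and ultimately $w$) lies in the right subtree of $u_j$, so both positions and labels increase and $u_1\cdots u_{t+1}w$ is an occurrence of $12\cdots(t+2)$ (in the example above this selects $1,3,4$, which works). With that substitution your proof goes through and coincides with the paper's.
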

\begin{proof}
   The proof follows by similar arguments to those in Proposition~\ref{prop lrrp}.
\end{proof}


 We can now provide a bijective proof of Conjecture~\ref{main conj2}.
\begin{proof}[{\bf Bijective proof of Conjecture~\ref{main conj2}}]
In view of Propositions~\ref{prop lrrp} and~\ref{prop lrp}, the composition 
$\lambda^{-1}\circ\beta\circ\gamma\circ\lambda$
sets up a one-to-one correspondence between  $\S_n(321,23\cdots(t+2)1)$ and 
$\S_n(321,123\cdots(t+2))$.
   \end{proof}

\begin{remark}
Using the transfer-matrix method, Chow and West~\cite{CW} (see also~\cite{Man}) proved that 
$$
|\S_n(321,23\cdots(t+2)1)|=|\S_n(231,23\cdots(t+2)1)|=|\S_n(231,12\cdots(t+2))|.
$$
Our approach using binary trees provides bijective proofs of the above equinumerosities with refinements. 
\end{remark}

\section*{Acknowledgement} 
This work was
initiated while the second author was visiting the third author at Research Center for Mathematics and Interdisciplinary Sciences of Shandong University (Qingdao) in June, 
2025. This work was supported by the National  Science Foundation of China (grants 12322115 \& 12271301) 
 and the Fundamental Research Funds for the Central Universities.


\begin{thebibliography}{99}

\bibitem{Ath} C.A. Athanasiadis, Local $h$-polynomials, Uniform Triangulations and Real-rootedness, Combinatorica, {\bf45} (2025), 22 pages. 

\bibitem{Bon} M. B\'ona, Symmetry and unimodality in $t$-stack sortable permutations, J. Combin. Theory Ser. A,  {\bf98} (2002), 201--209.

\bibitem{Bon2} M. B\'ona, {\it Combinatorics of Permutations}, third edition. CRC Press, 2022.

\bibitem{Bous} M. Bousquet-M\'elou, Sorted and/or sortable permutations, Discrete Math., {\bf 225} (2000), 25--50.

\bibitem{CW}T. Chow and J. West, Forbidden subsequences and Chebyshev polynomials, Discrete Math., {\bf204} (1999), 119--128.

\bibitem{CK} A. Claesson and S. Kitaev,  Classification of bijections between 321- and 132-avoiding permutations, S\'em. Lothar. Combin., {\bf60} (2008), Article B60d.

\bibitem{Def} C. Defant,  Counting $3$-stack-sortable permutations, J. Combin. Theory Ser. A, {\bf172} (2020), 105209.

\bibitem{Deu} E. Deutsch, A bijection on Dyck paths and its consequences, Discrete Math., {\bf 179} (1998), 253--256.


\bibitem{EP} S. Elizalde and I. Pak, Bijections for refined restricted permutations, J. Comb. Theory, Ser. A, {\bf105} (2004), 207--219.

\bibitem{Kit2} S. Kitaev, {\it Patterns in Permutations and Words}, Springer Science \& Business Media, 2011.


\bibitem{Knu} D.E. Knuth, {\it The Art of Computer Programming}, vol.~1, Fundamental Algorithms, Addison-Wesley, Reading, Massachusetts, 1973.

\bibitem{Kra}C. Krattenthaler, Permutations with restricted patterns and Dyck paths, Adv. Appl. Math., {\bf27} (2001), 510--530. 

\bibitem{LLY}Z. Lin, J.  Liu and S.H.F. Yan,  Parity statistics on restricted permutations and the Catalan-Schett polynomials, J. Combin. Theory Ser. A, {\bf215} (2025), Article 106049.





\bibitem{Man} T. Mansour and A. Vainshtein, Restricted permutations and Chebyshev polynomials, S\'{e}m. Lothar. Combin., {\bf47} (2002), Article B47c.

\bibitem{Reifegerste} A. Reifegerste, On the diagram of 132-avoiding permutations, European J. Combin., {\bf24} (2003), 759--776.

\bibitem{SS} R.~Simion and F.~Schmidt, Restricted permutations, European J. Combin., {\bf6} (1985), 383--406.

\bibitem{St0} R.P. Stanley, {\it Enumerative Combinatorics}, vol. 1, Cambridge Stud. Adv. Math., vol. 49, Cambridge Univ. Press, Cambridge, 1997.

\bibitem{St1} R.P. Stanley, {\it Catalan numbers}, Cambridge University Press, New York, 2015.

\bibitem{Ulf} H. \'Ulfarsson, Describing West-3-stack-sortable permutations with permutation patterns, S\'emin. Lothar. Comb., {\bf67} (2012), Article B67d.

\bibitem{West} J. West, Permutations with forbidden  subsequences and stack-sortable permutations, Ph.D. Thesis, M.I.T., 1990.

\bibitem{Zhang} P.B. Zhang and S. Kitaev, Descent generating polynomials for ($n-3$)- and ($n-4$)-stack-sortable (pattern-avoiding) permutations, Discrete Appl. Math., {\bf 372} (2025), 1--14.

\bibitem{Zei} D. Zeilberger, A proof of Julian West's conjecture that the number of two-stack-sortable permutations of length $n$ is $2(3n)!/((n + 1)!(2n + 1)!)$, Discrete Math., {\bf102} (1992), 85--93.

\end{thebibliography}
\end{document}